%forked from <countingsingularZp5-mmrMay19.tex> on 8 Aug 2019

\documentclass{amsart}
\usepackage{amsrefs}

% %-- Margins -------------------------------------------------------
% \vfuzz2pt % Don't report over-full v-boxes if over-edge is small
% \hfuzz2pt % Don't report over-full h-boxes if over-edge is small
% \raggedbottom \oddsidemargin=0.25in \evensidemargin=0.25in
% \textwidth=6.0in \textheight=9.0in \topmargin=0in \headheight=.2in
% \headsep=0.05in

%---Packages --------------------
\usepackage{amsfonts}
\usepackage{amsmath}
\usepackage{amssymb}
\usepackage{amsthm}
\usepackage{latexsym}
\usepackage{url}
\usepackage{verbatim}
\usepackage{mathtools}

%***for draft only!
%\usepackage[pagebackref]{hyperref}
%\usepackage{todonotes}
%\usepackage[scrtime]{prelim2e}
%***

% Define Theorem Styles ----------------------------------
\theoremstyle{plain}
\newtheorem{theorem}{Theorem}
\newtheorem{proposition}[theorem]{Proposition}

\newtheorem{corollary}[theorem]{Corollary}

\theoremstyle{definition}
\newtheorem{definition}{\mdseries\scshape Definition}

\newtheorem{remark}{\mdseries\scshape Remark}

% Bibliography Formatting ----------------------------------

\BibSpec{article}{%
    +{}  {\PrintAuthors}                {author}
    +{,} { \bibquotes}                     {title}
    +{.} { }                            {part}
    +{:} { \bibquotes}                     {subtitle}
    +{,} { \PrintContributions}         {contribution}
    +{.} { \PrintPartials}              {partial}
    +{,} { }                            {journal}
    +{}  { \textbf}                     {volume}
    +{} { \parenthesize}                  {number}
    +{,} { \eprintpages}                {pages}
    +{\mbox{}}  { \PrintDatePV}                {date}
    +{.} { }                            {status}
%    +{,} { \PrintDOI}                   {doi}
    +{.} { \eprint}        {eprint}
    +{}  { \parenthesize}               {language}
    +{}  { \PrintTranslation}           {translation}
    +{;} { \PrintReprint}               {reprint}
    +{.} { }                            {note}
    +{.} {}                             {transition}
    +{}  {\SentenceSpace \PrintReviews} {review}
}

\BibSpec{collection.article}{%
    +{}  {\PrintAuthors}                {author}
    +{,} { \bibquotes}                     {title}
    +{.} { }                            {part}
    +{:} { \bibquotes}                     {subtitle}
    +{,} { \PrintContributions}         {contribution}
    +{,} { in \textit}                            {booktitle}
   +{,} { pp.~}                        {pages}
    +{,} { in \PrintConference}            {conference}
     +{}  {\PrintBook}                   {book}   
    +{,} { \PrintDateB}                 {date}
    +{,} { }                            {status}
    +{,} { \PrintDOI}                   {doi}
    +{,} { \eprint}        {eprint}
    +{}  { \parenthesize}               {language}
    +{}  { \PrintTranslation}           {translation}
    +{;} { \PrintReprint}               {reprint}
    +{).} { }                            {note}
    +{).} {}                             {transition}
    +{}  {\SentenceSpace \PrintReviews} {review}
}

\BibSpec{book}{%
    +{}  {\PrintPrimary}                {transition}
    +{,} { \textit}                     {title}
    +{.} { }                            {part}
    +{:} { \textit}                     {subtitle}
    +{,} { \PrintEdition}               {edition}
    +{}  { \PrintEditorsB}              {editor}
    +{,} { \PrintTranslatorsC}          {translator}
    +{,} { \PrintContributions}         {contribution}
    +{,} { }                            {series}
    +{,} { \voltext}                    {volume}
    +{} { (}                            {publisher}
    +{,} { }                            {organization}
    +{,} { }                            {address}
    +{,} { \PrintDateB}                 {date}
    +{,} { }                            {status}
    +{}  { \parenthesize}               {language}
    +{}  { \PrintTranslation}           {translation}
    +{;} { \PrintReprint}               {reprint}
    +{).} { }                            {note}
    +{).} {}                             {transition}
    +{}  {\SentenceSpace \PrintReviews} {review}
}

\BibSpec{innerbook}{%
    +{} {\textit}                            {title}
    +{.} { }                            {part}
    +{:} { \textit}                            {subtitle}
    +{,} { \PrintEdition}               {edition}
 %   +{}  { \PrintEditorsB}              {editor}
    +{,} { \PrintTranslatorsC}          {translator}
    +{,} { \PrintContributions}         {contribution}
    +{,} { }                            {series}
    +{,} { \voltext}                    {volume}
    +{} { (}                            {publisher}
    +{,} { }                            {organization}
    +{,} { }                            {address}
    +{,} { \PrintDateB}                 {date}
    +{).} { }                            {note}
}

\BibSpec{thesis}{%
    +{}  {\PrintAuthors}                {author}
    +{,} { \textit}                     {title}
    +{:} { \textit}                     {subtitle}
    +{,} { \PrintThesisType}            {type}
    +{,} { (}                            {organization}
    +{,} { }                            {address}
    +{,} { \PrintDateB}                 {date}
    +{,} { \eprint}                     {eprint}
    +{,} { }                            {status}
    +{}  { \parenthesize}               {language}
    +{}  { \PrintTranslation}           {translation}
    +{;} { \PrintReprint}               {reprint}
    +{).} { }                            {note}
    +{).} {}                             {transition}
    +{}  {\SentenceSpace \PrintReviews} {review}
}

\BibSpec{report}{%
    +{}  {\PrintPrimary}                {transition}
    +{,} { \bibquotes}                     {title}
    +{.} { }                            {part}
    +{:} { \bibquotes}                     {subtitle}
    +{,} { \PrintEdition}               {edition}
    +{,} { \PrintContributions}         {contribution}
    +{,} { Technical Report }           {number}
    +{,} { }                            {series}
    +{} { (}                            {organization}
    +{,} { }                            {address}
    +{,} { \PrintDateB}                 {date}
    +{).} { \eprint}                     {eprint}
    +{,} { }                            {status}
    +{}  { \parenthesize}               {language}
    +{}  { \PrintTranslation}           {translation}
    +{;} { \PrintReprint}               {reprint}
    +{.} { }                            {note}
    +{.} {}                             {transition}
    +{}  {\SentenceSpace \PrintReviews} {review}
}

% New Commands ---------------------------------------------------------

\newcommand{\set}[1]{\left\{#1\right\}}
\newcommand{\abs}[1]{\left\vert#1\right\vert}
\newcommand{\floor}[1]{\left\lfloor#1\right\rfloor}

\newcommand{\Z}{\mathbb{Z}}
\newcommand{\Zp}{\mathbb{Z}_p}
\newcommand{\oneunit}[1]{\left\langle#1\right\rangle}

\DeclareMathOperator{\ord}{ord}

\begin{document}

\title[Fixed points and rooted closed walks of
$x \mapsto x^{x^n}$]{Counting fixed points and rooted closed walks of the singular map
  $x \mapsto x^{x^n}$ modulo powers of a prime}

\author{Joshua Holden}

\address{Department of Mathematics,
Rose-Hulman Institute of Technology,
5500 Wabash Ave.,
Terre Haute, IN 47803, USA}

\email{holden@rose-hulman.edu}

\author{Pamela A. Richardson}

\address{Department of Mathematics and Computer Science,
Westminister College,
319 South Market Street, New Wilmington, 
PA 16172, USA}

\email{richarpa@westminister.edu}

\author{Margaret M. Robinson} 

\address{Department of Mathematics and Statistics, 
Mount Holyoke College,
50 College Street, South Hadley,
  MA 01075, USA}

\email{robinson@mtholyoke.edu}

\subjclass[2010]{Primary 11D88; Secondary 11A07, 11T71, 94A60}

%had 11N37: Asymptotic results on arithmetic functions, 11Y16:
%Algorithms; complexity, don't seem to apply (left over from one of my
%papers?)

\keywords{self-power map, $p$-adic interpolation, Hensel's Lemma,
  singular lifting, fixed points, two-cycles}

\begin{abstract}
 The ``self-power'' map $x \mapsto x^x$ modulo $m$ and its generalized
 form $x \mapsto x^{x^n}$ modulo $m$ are of considerable interest for
 both theoretical reasons and for potential applications to
 cryptography.  In this paper, we use $p$-adic methods, primarily
 $p$-adic interpolation, Hensel's lemma, and lifting singular points
 modulo $p$, to count fixed points and rooted closed walks of equations related
 to these maps when $m$ is a prime power.  In particular, we introduce a new technique for lifting singular solutions of several congruences in several unknowns using the left kernel of the Jacobian matrix.
\end{abstract}

\maketitle

\section{Introduction}

The study of the ``self-power'' map $x \mapsto x^x$ modulo $m$ goes
back at least to two papers by Crocker in the 1960's~\cites{crocker66,
  crocker69}.  Its study has accelerated in recent years due to both
improvements in technique (see, for instance, \citelist{\cite{somer}%
  \cite{holden02}\cite{holden02a}\cite{holden_moree}\cite{REU2010}%
  \cite{holden_robinson}\cite{anghel}\cite{anghel16}\cite{balog_et_al}%
  \cite{CG14}\cite{CG15}\cite{G18}%
  \cite{holden_friedrichsen}\cite{kurlberg_et_al}\cite{felix_kurlberg}\cite{HHJ}})
and its relation to a variation of the ElGamal digital signature
scheme given in, e.g., \cite{handbook}*{Note~11.71}.  Most of these focused on the case where $m$ is a prime, but~\cite{HHJ} investigated solutions to
\begin{equation} \label{eqn:spfpm}
x^x \equiv x \pmod{m}
\end{equation}
for general composite $m$, and~\cite{holden_robinson} used $p$-adic techniques to
investigate solutions to the equations (among others)
\begin{equation}\label{spe}
  x^{x} \equiv c \pmod{p^e}
\end{equation} for fixed $c$ and $x$ in $\set{1, \dots, p^e(p-1)}$
and
\begin{equation} \label{hae}
    h^{h} \equiv a^{a} \pmod{p^e}
\end{equation}
for $a$ and $h$ in $\set{1, \dots, p^e(p-1)}$.  

In this work we will
use similar techniques to investigate the number of fixed points of the
self-power map, i.e., solutions to
\begin{equation} \label{eqn:spfp}
x^x \equiv x \pmod{p^e},
\end{equation}
and two-cycles, or solutions to
\begin{equation} \label{eqn:sptc}
x^x \equiv y \pmod{p^e} \quad \text{and} \quad y^y \equiv x \pmod{p^e},
\end{equation}
as well as solutions in the $p$-adic integers $\Zp$.
In fact, we give results for more general situations including
\begin{equation} \label{eqn:spfpn}
x^{x^n} \equiv x \pmod{p^e},
\end{equation}
\begin{equation} \label{eqn:sptcn}
x^{x^n} \equiv y \pmod{p^e} \quad \text{and} \quad y^{y^n} \equiv x \pmod{p^e},
\end{equation}
for all $p$ and $n$, 
and select cases of
\begin{equation} \label{eqn:sprwn}
x_1^{g(x_1)} \equiv x_2 \pmod{p^e}, \quad \ldots, \quad x_k^{g(x_k)} \equiv x_1 \pmod{p^e}.
\end{equation}

This particular generalization was inspired by study of the map $x
\mapsto g^{x^n}$ modulo $p$ for a fixed integer $g$, which has been
used in a secret sharing scheme~\cite{stadler} and a group signature
scheme~\cite{CS97}, among other places.  A preliminary study of the
case $n=2$ of this map was begun in~\cite{wood}, and the solutions to
$g^{x^n} \equiv x^k$ modulo $p^e$ were later studied in~\cite{mann}
with some conditions on $p$, $k$, and $n$.  It is also known that the
discrete logarithm problem, that is, the problem of inverting the map
$x \mapsto g^{x}$ modulo $p$, can be solved more quickly if a value of
$g^{x^n}$ modulo $p$ is known in addition.  (See \cite{cheon}, for
example.)  It would be interesting to know if this also applies to the
self-power map.  For a general polynomial $g(x)$, we also give some
results on the generalized self-power map $x \mapsto x^{g(x)}$ in the
case $e=1$.  Other results for this map, including discussions of
fixed points, appear in~\cite{kurlberg_et_al}*{Thm.~10},
\cite{CG14}*{Cor.~2}, and~\cite{G18}*{Cor.~1}.

Solutions to these congruences modulo $p^e$ can also be counted
without using $p$-adic techniques.  One advantage of using $p$-adic
methods is that we not only count solutions but also show how the
solutions modulo different values of $p^e$ relate.  In particular, we
show that almost all solutions fail to lift to arbitrarily high values
of $p^e$ (or equivalently to $\Zp$).  This is in stark contrast to the
situations in~\cite{holden_robinson}
and~\cite{mann}, where all
solutions lift arbitrarily high.

The primary $p$-adic techniques used in this paper are $p$-adic
interpolation and lifting techniques, including Hensel's lemma and
lifting singular points modulo $p$.  Unlike the situation
in~\cite{holden_robinson} and~\cite{mann}, not every solution modulo
$p$ is nonsingular.  Nonsingular solutions can be lifted uniquely to
$\Zp$ using Hensel's lemma, but singular ones cannot be lifted by the lemma.
Section~\ref{interpolation} provides the necessary background for the
$p$-adic techniques.  Section~\ref{fixed} counts the number of fixed
points, that is, solutions of~\eqref{eqn:spfpn}, for both odd $p$ and
$p=2$. Section~\ref{sec:rw} introduces rooted closed walks and some  techniques required for lifting solutions to systems of equations such as~\eqref{eqn:sprwn}.
Section~\ref{sec:tc} uses a new form of these techniques to count the number of
two-cycles, or solutions of~\eqref{eqn:sptcn}, for odd and even $p$.
Finally, Section~\ref{future} discusses future work.

\section{Interpolation and Lifting} \label{interpolation}

Let $p$ be a prime, and let $q=4$ if $p=2$, $q=p$ otherwise.  As
in~\cite{holden_robinson}, our starting point is the difficulty of
interpolating the function $f(x) = x^{x^n}$, defined on $x \in \Z$, to
a continuous function on $x \in \Zp$, the ring of $p$-adic integers.  An analytic interpolation
is only possible if the base 
of our $p$-adic exponentiation is in $1 + q \Zp$.  (See for example,
\cite[Section 4.6]{gouvea}, \cite[Section~4.6]{katok}, or
\cite[Section II.2]{koblitz}.)
%\todo[author=Josh]{***need citation for
%  the fact that it isn't analytic for $p=2$?  Is it worth noting that
%  the interpolations are ``locally analytic of order 1''?***}

Therefore, we let $\mu_{\phi(q)}$ be the set of
all $\phi(q)$-th roots of unity contained in $\Zp^\times$, the units in $ \Zp$,  and consider the Teichm\"uller
character
$$\omega: \Zp^\times \to \mu_{\phi(q)},$$
which is a surjective homomorphism.  (Throughout this paper, $\phi(m)$
will refer to the Euler phi function.)  It is known that $\Zp^\times$ has
a canonical decomposition as
\begin{equation} \label{eq:decomposition}
\Zp^\times \cong \mu_{\phi(q)} \times (1+q\Zp)
\end{equation}
\cite[Cor. 4.5.10]{gouvea}, and thus for $x$ in $\Zp^\times$, we may
uniquely write $x = \omega(x) \oneunit{x}$ for some
$\oneunit{x} \in 1 + q\Zp$.

The proof of the following proposition follows from the techniques of Problem~185
of~\cite{gouvea} and Proposition~2.1 of~\cite{holden_robinson}.

\begin{proposition}
  Let $x_0 \in \Z/\phi(q)\Z$, and let
$$I_{x_0} = \set{x \in \Z \mid x \equiv x_0
  \pmod{\phi(q)}} \subseteq \Z.$$  Let $g(x)$ be any polynomial.
Then $$f_{x_0}(x) = \omega(x)^{g(x_0)}\oneunit{x}^{g(x)} =
\omega(x)^{g(x_0)} \exp(g(x) \log \oneunit{x})$$ defines a function
which is analytic on $1 + q\Zp$ and locally analytic
on $\Zp^\times$, such that $f_{x_0}(x) = x^{g(x)}$ whenever $x\in I_{x_0}$.
\end{proposition}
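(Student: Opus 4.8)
The plan is to decompose the statement into its three assertions---the equality of the two displayed formulas, the (local) analyticity, and the interpolation property---and to treat them in that order.

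First I would record the standard $p$-adic analysis facts that make the formula meaningful. The logarithm $\log$ converges on $1+q\Zp$ and maps it into $q\Zp$, while $\exp$ converges on $q\Zp$ and maps it back into $1+q\Zp$, the two being mutually inverse. For a base $a\in 1+q\Zp$ and exponent $b\in\Zp$ one defines $a^{b}:=\exp(b\log a)$, and when $b\in\Z$ this agrees with the ordinary power, since $\log(a^{b})=b\log a$ and $\exp\circ\log$ is the identity on $1+q\Zp$. Because $\oneunit{x}\in 1+q\Zp$ and $g(x)\in\Zp$, the element $\log\oneunit{x}$ lies in $q\Zp$, so $g(x)\log\oneunit{x}\in q\Zp$ lies in the domain of $\exp$; this simultaneously justifies the equality $\oneunit{x}^{g(x)}=\exp(g(x)\log\oneunit{x})$ and shows the formula is well-defined. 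Note also that $\omega(x)^{g(x_0)}$ depends only on the residue of $g(x_0)$ modulo $\phi(q)$, since $\omega(x)^{\phi(q)}=1$, so it makes sense for $x_0\in\Z/\phi(q)\Z$.

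Next I would prove analyticity on $1+q\Zp$. There the decomposition \eqref{eq:decomposition} forces $\omega(x)=1$ and $\oneunit{x}=x$, so $f_{x_0}(x)=\exp(g(x)\log x)$. This is a composite of analytic functions: $\log$ is analytic on $1+q\Zp$, the polynomial $g$ is analytic, their product is analytic with image in $q\Zp$, and $\exp$ is analytic there; hence so is $f_{x_0}$. For local analyticity on $\Zp^{\times}$ I would use that $\omega$ factors through $(\Z/q\Z)^{\times}$ and is therefore locally constant, so $\Zp^{\times}$ is the disjoint union of the finitely many balls $a(1+q\Zp)$ with $a\in\mu_{\phi(q)}$. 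On such a ball $\omega(x)=a$ is constant and $\oneunit{x}=a^{-1}x$, whence $f_{x_0}(x)=a^{g(x_0)}\exp(g(x)\log(a^{-1}x))$, analytic in $x$ by the same composition argument. Local analyticity follows.

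Finally I would establish the interpolation property, which is the conceptual heart of the proposition and the reason for restricting to the progression $I_{x_0}$. For $x\in I_{x_0}$ (a unit), I expand the ordinary power $x^{g(x)}=(\omega(x)\oneunit{x})^{g(x)}=\omega(x)^{g(x)}\oneunit{x}^{g(x)}$. Comparing with $f_{x_0}(x)$, it remains only to show $\omega(x)^{g(x)}=\omega(x)^{g(x_0)}$. Since $g$ has integer coefficients and $x\equiv x_0\pmod{\phi(q)}$, we get $g(x)\equiv g(x_0)\pmod{\phi(q)}$; as $\omega(x)$ is a $\phi(q)$-th root of unity, raising it to exponents congruent modulo $\phi(q)$ gives the same value, so $\omega(x)^{g(x)}=\omega(x)^{g(x_0)}$ and thus $f_{x_0}(x)=x^{g(x)}$. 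I expect this last step to be the main obstacle in spirit rather than in difficulty: the naive map $x\mapsto x^{g(x)}$ resists analytic interpolation precisely because the Teichm\"uller factor carries the \emph{varying} exponent $g(x)$, and the construction works by freezing that exponent at $g(x_0)$---legitimate only on $I_{x_0}$, where the congruence above holds.
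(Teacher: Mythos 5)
Your proof is correct, but it reaches the conclusion by a genuinely different mechanism than the paper does. The paper defines $\oneunit{x}^{g(x)}$ as an ordinary power only for $x \in \mathbb{N}\setminus p\mathbb{N}$ (where $g(x)$ is an integer), checks that it agrees with $\exp(g(x)\log\oneunit{x})$ on that set, and then invokes uniqueness of continuous interpolation from a dense subset (Problem 185 of Gouv\^ea) to propagate both the equality of the two formulas and the analyticity to all of $\Zp^\times$. You instead take $\exp(g(x)\log\oneunit{x})$ as the \emph{definition} of the power on all of $\Zp^\times$, verify directly that it is well defined (the domains and mutual inverseness of $\exp$ and $\log$), that it restricts to ordinary powers when the exponent is an integer, and that it is analytic coset by coset: your explicit formula $f_{x_0}(x) = a^{g(x_0)}\exp\left(g(x)\log(a^{-1}x)\right)$ on $a(1+q\Zp)$ makes the local analyticity concrete, with no density or unique-extension argument anywhere. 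What the paper's route buys is the interpolation viewpoint itself: the function is exhibited as \emph{the} unique continuous extension of integer exponentiation from a dense set, which is conceptually what the section is about. What your route buys is self-containedness and explicitness; you also record a point the paper leaves implicit, namely that $\omega(x)^{g(x_0)}$ is well defined for $x_0 \in \Z/\phi(q)\Z$ because $\omega(x)^{\phi(q)}=1$. The final step is identical in both arguments: $x \equiv x_0 \pmod{\phi(q)}$ gives $g(x) \equiv g(x_0) \pmod{\phi(q)}$ (here both you and the paper tacitly need $g$ to have integer coefficients), so $\omega(x)^{g(x)} = \omega(x)^{g(x_0)}$ and $f_{x_0}(x) = x^{g(x)}$ on $I_{x_0}$.
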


\begin{remark}
  Note that when $p=2$, $I_1 = \Z \setminus 2\Z$, which is dense in
  $\Z_2^\times$.  Therefore we will only need one version of
  $f_{x_0}(x)$, that is, $x_0=1$, in this case.
\end{remark}

Finally, we will want a version of Hensel's lemma
that applies to power series, not just polynomials.  We will use this
in the cases where the solution to an equation is nonsingular modulo $p$.

\begin{definition}[Defn.~III.4.2.2   of~\cite{bourbaki}] 
  A power series $f(x_1,x_2,\dots,x_n)$ in the ring of formal power
  series $\mathbb{Z}_p[[x_1,\dots,x_n]]$ with coefficients in
  $\mathbb{Z}_p$ is called \emph{restricted} if $f(x_1,\dots,x_n)=
  \sum_{(\alpha_i)} C_{\alpha_1,\alpha_2, \cdots, \alpha_n}
  x_1^{\alpha_1}\cdots x_n^{\alpha_n}$ and for every neighborhood $V$
  of 0 in $ \Zp $ there is only a finite number of coefficients
  $C_{\alpha_1,\alpha_2, \cdots, \alpha_n}$ not belonging to $V$ (in
  other words, the family $(C_{\alpha_1,\alpha_2, \cdots, \alpha_n})$
  tends to 0 in $\Zp$).

% In particular, the series in this paper are going to be $p$-adic
% convergent series such that 
% $\lim_{ |\alpha| \to \infty}  | C_{\alpha_1,\alpha_2,
%   \dots,\alpha_n} |_p =0$ where $|\alpha|=\alpha_1+\alpha_2+ \cdots+
%   \alpha_n.$
In particular, the series in this paper are going to be $p$-adic
convergent series $\sum_\alpha C_\alpha x^\alpha$ in $\Zp[[x]]$ such that 
$\lim_{ \alpha \to \infty}  | C_{\alpha} |_p =0$.

\end{definition}

% \begin{definition}
% Let
%   $f(x)$ be a restricted power series in
%   $\mathbb{Z}_p[[x]]$. A point ${a}$ in $\Zp$ is called \emph{nonsingular modulo $p$} if
%   $\frac{df}{dx}(a)$ is in $\Zp^\times$. Otherwise, 
%   $\frac{df}{dx}(a) \equiv 0 \pmod p$ and the the point $a$ is called \emph{a singular point modulo $p$}.
% \end{definition}

\begin{definition}
Consider a collection of
  $n$ restricted power series $f_j(x_1,x_2,\dots,x_n)$ for $1 \le j\le
  n$ in $\mathbb{Z}_p[[x_1,x_2,\dots,x_n]]$. A vector $(a_1,a_2,\dots,a_n)$ 
   in $\mathbb{Z}_p^n$ is called \emph{nonsingular modulo $p$} if  the determinant of the
  Jacobian matrix at $(a_1,a_2,\dots,a_n)$
$$ \left|  {\frac{\partial(f_1,f_2,\dots,f_n)}{\partial(x_1,x_2, \dots,x_n)}}  
(a_1,a_2,\dots,a_n) \right|$$
is in $\mathbb{Z}_p^\times$.  Otherwise the vector is called \emph{singular modulo $p$}.
\end{definition}

\begin{proposition}[Cor. III.4.5.2
  of~\cite{bourbaki}] \label{hensel-system} Consider a collection of
  $n$ restricted power series $f_j(x_1,x_2,\dots,x_n)$ for $1 \le j\le
  n$ in $\mathbb{Z}_p[[x_1,x_2,\dots,x_n]]$. Let $(a_1,a_2,\dots,a_n)$ be
  a nonsingular vector modulo $p$ such that $f_j(a_1,a_2,\dots,a_n) 
\equiv 0 \pmod p$ for $1\le j \le n$. Then there exists a unique
$(x_1, x_2,\dots,x_n) \in \mathbb{Z}_p^n$ for which $x_i \equiv a_i \pmod p$ 
for $1\le i \le n$ and $f_j(x_1,x_2,\dots,x_n) =0$ in $\mathbb{Z}_p$ for $1 \le j \le n$.
\end{proposition}

As a corollary we get:
\begin{proposition} \label{hensel-one}
Let
  $f(x)$ be a restricted power series in
  $\mathbb{Z}_p[[x]]$, and let ${a}$ be in $\Zp$ such that
  $\frac{df}{dx}(a)$ is in $\Zp^\times$ and $f(a) \equiv 0 \pmod p$.
  Then there exists a unique
$x \in \mathbb{Z}_p$ for which $x \equiv a \pmod p$ 
and $f(x)=0$ in $\mathbb{Z}_p$.
\end{proposition}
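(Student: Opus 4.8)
The plan is to prove this by $p$-adic Newton iteration (successive approximation), the restricted-power-series analogue of the classical Hensel lifting argument. The one genuinely new ingredient, compared with the polynomial case, is a Taylor expansion valid for restricted power series, so I would establish that first and then run the standard iteration.

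First I would record the key expansion. For a restricted $f(x) = \sum_k c_k x^k$ with $|c_k|_p \to 0$ and for $a, t \in \Zp$, I want
\begin{equation*}
f(a+t) = f(a) + f'(a)\, t + t^2\, g(a,t),
\end{equation*}
where $f'(a) = \frac{df}{dx}(a) = \sum_k k c_k a^{k-1}$ and $g(a,t) \in \Zp$. Formally this comes from $(a+t)^k = \sum_j \binom{k}{j} a^{k-j} t^j$ and collecting the terms with $j \ge 2$. The only thing to check is that the resulting series converge and that the double sum may be rearranged; this is exactly where the restricted hypothesis is used. Since $a, t \in \Zp$ forces $|\binom{k}{j} a^{k-j} t^j|_p \le 1$, each term of the double family is bounded in absolute value by $|c_k|_p$, and because $j \le k$ while $|c_k|_p \to 0$, only finitely many terms exceed any given $\varepsilon$. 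The family is therefore $p$-adically summable, so the rearrangement is legitimate and each of $f(a)$, $f'(a)$, and $g(a,t)$ lies in $\Zp$. I expect this convergence bookkeeping to be the main obstacle, though it is routine once one commits to it.

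With the expansion in hand, the iteration is standard. Starting from $a_0 = a$, I would set $a_{k+1} = a_k - f(a_k)/f'(a_k)$ (legitimate in $\Zp$ since $f'(a_k)$ will be a unit) and prove by induction that $f'(a_k) \in \Zp^\times$ and that the order of $f(a_k)$ at least doubles at each step. Concretely, if $f(a_k) \equiv 0 \pmod{p^m}$ with $m \ge 1$ and $f'(a_k) \in \Zp^\times$, then $t := -f(a_k)/f'(a_k)$ satisfies $\ord_p t = m$, so the expansion gives $f(a_{k+1}) = f(a_k) + f'(a_k)t + t^2 g = t^2 g \equiv 0 \pmod{p^{2m}}$. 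Moreover $a_{k+1} \equiv a_k \pmod{p^m}$, whence $a_{k+1} \equiv a \pmod p$ and $f'(a_{k+1}) \equiv f'(a_k) \pmod{p^m}$ stays a unit. The base case is precisely the hypothesis $f(a) \equiv 0 \pmod p$ and $\frac{df}{dx}(a) \in \Zp^\times$.

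Finally I would collect the conclusions. The sequence $(a_k)$ is Cauchy, since consecutive terms agree to ever-higher powers of $p$; as $\Zp$ is complete it converges to some $x$ with $x \equiv a \pmod p$, and continuity of the restricted series $f$ gives $f(x) = \lim_k f(a_k) = 0$. For uniqueness, suppose $f(x) = f(x') = 0$ with $x \equiv x' \equiv a \pmod p$, and subtract via the expansion: $0 = f(x') - f(x) = f'(x)(x'-x) + (x'-x)^2 g(x, x'-x)$. If $x \ne x'$, cancel one factor of $x'-x$ to obtain $f'(x) = -(x'-x)\, g(x, x'-x)$; but $f'(x) \equiv f'(a) \pmod p$ is a unit while the right-hand side is divisible by $p$, a contradiction. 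Hence $x = x'$, completing the proof.
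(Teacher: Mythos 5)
Your proposal is correct, but there is nothing in the paper to compare it against step-by-step: the paper does not prove this proposition at all, it simply quotes it from Bourbaki (Cor.~III.4.5.2). What you have written is essentially the standard argument that the cited source formalizes, so your proof makes the paper self-contained at this point. The structure is sound: the quadratic-remainder Taylor expansion $f(a+t)=f(a)+f'(a)t+t^2g(a,t)$ is justified exactly where you say it is, namely by summability of the family $\bigl(c_k\binom{k}{j}a^{k-j}t^j\bigr)$, each term bounded by $\abs{c_k}_p$ with $\abs{c_k}_p\to 0$ and only finitely many $j$ per $k$ --- this is precisely where the restricted hypothesis is used, and it is the only place the power-series case differs from the polynomial case. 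The Newton iteration then works as you describe, the uniqueness argument correctly cancels a factor of $x'-x$ in the integral domain $\Zp$ and reaches a unit-versus-multiple-of-$p$ contradiction, and completeness of $\Zp$ plus continuity of $f$ close the existence half. Two small points to tighten: your claim $\ord_p t = m$ should read $\ord_p t \geq m$ (equality need not hold, and only the inequality is used in the rest of the step); and the assertions $f'(a_{k+1})\equiv f'(a_k) \pmod{p^m}$ and $f'(x)\equiv f'(a)\pmod p$ deserve the one-line justification that $f'$ is itself a restricted series (its coefficients $kc_k$ also tend to $0$), so the same expansion, hence the same Lipschitz bound $\abs{f'(u)-f'(v)}_p\leq\abs{u-v}_p$, applies to it. An alternative of equal length, closer in spirit to how such lemmas are often proved, is digit-by-digit lifting (showing a unique solution modulo $p^m$ for each $m$), which yields existence and uniqueness simultaneously with only linear convergence; your Newton variant trades that for quadratic convergence and a separate uniqueness step. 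Either way, your route buys a self-contained proof where the paper's citation buys brevity at the cost of a black box.
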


% *******

% In here we need to put in the background stuff. We need to introduce
% the notation $x = \omega(x)<x>$ and the interpolation of
% $\omega(x)^{x_0^2} \oneunit{x}^{x^2} - x $ to a continuous function on
% $\Zp$ such that $\omega(x)^{x_0^2} \oneunit{x}^{x^2} - x =x^{x^2}-x$
% when $x \equiv x_0 \pmod {p-1}$.  Also explain that we can use the
% series representation of the exponential function to handle
% $<x>^{x^2}$ and related functions.

\section{Fixed Points} \label{fixed}

In this section, we are concerned with counting roots $x$ of the
function $x^{x^n} - x \bmod{p^e}$, where for a positive integer $e$ and a
prime $p$, we allow $x \in \{ 1, 2, \ldots, p^e(p-1)\} $ such that
$p \nmid x$. To begin, we fix $x_0 \in \Z/(p-1)\Z$ and consider an
auxiliary function $\omega(x)^{g(x_0)} \oneunit{x}^{g(x)} - x \bmod p^e$
defined for any polynomial $g(x)$.

\begin{theorem} \label{x0fixedpt} Let $p$ be a prime $p \neq 2$ and
  $g(x)$ be a polynomial. Then for every $x_0 \in \Z/(p-1)\Z$, there
  are $\gcd(p-1,g(x_0)-1)$ solutions $x$ to the congruence
$$\omega(x)^{g(x_0)} \oneunit{x}^{g(x)} \equiv x \pmod p$$
where $x \in (\Z / p \Z)^\times$.  Alternatively, for any given
$x \in (\Z / p \Z)^\times$, there are
$$N_{g-1}(\ord_p{x}) \frac{p-1}{\ord_p{x}}$$ values of
$x_0 \in \Z/(p-1)\Z$ such that
$$\omega(x)^{g(x_0)} \oneunit{x}^{g(x)} \equiv x \pmod p,$$
where $N_{g-1}(d)$ is the number of solutions to $g(z)-1 \equiv 0$ 
modulo $d$ and $\ord_p{x}$ is the multiplicative order of $x$ modulo $p$.
\end{theorem}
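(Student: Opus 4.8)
The plan is to reduce the $p$-adic congruence to an elementary congruence in the cyclic group $(\Z/p\Z)^\times$, after which both counts follow from standard order arguments. The first step is to observe that modulo $p$ the two factors on the left simplify dramatically. Since $\oneunit{x} \in 1 + p\Zp$, the exponential--logarithm formula $\oneunit{x}^{g(x)} = \exp(g(x)\log\oneunit{x})$ shows that $\oneunit{x}^{g(x)} \in 1 + p\Zp$, so $\oneunit{x}^{g(x)} \equiv 1 \pmod p$. By the defining property of the Teichm\"uller character, $\omega(x)$ is the unique $(p-1)$-th root of unity with $\omega(x) \equiv x \pmod p$, hence $\omega(x)^{g(x_0)} \equiv x^{g(x_0)} \pmod p$. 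Combining these, the congruence in the statement is equivalent to
$$x^{g(x_0)} \equiv x \pmod p,$$
and since $x \in (\Z/p\Z)^\times$ is a unit we may cancel one factor of $x$ to obtain the equivalent condition $x^{g(x_0)-1} \equiv 1 \pmod p$.

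For the first count, I would fix $x_0$ and count the $x \in (\Z/p\Z)^\times$ satisfying $x^{g(x_0)-1} \equiv 1 \pmod p$. Because $(\Z/p\Z)^\times$ is cyclic of order $p-1$, the number of solutions to $x^k \equiv 1$ is exactly $\gcd(k, p-1)$; taking $k = g(x_0)-1$ yields the stated count $\gcd(p-1, g(x_0)-1)$. (The degenerate case $g(x_0) \equiv 1$, where every unit is a solution, is consistent with $\gcd(p-1,0) = p-1$.)

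For the second count, I would instead fix $x$ and vary $x_0 \in \Z/(p-1)\Z$. Writing $d = \ord_p x$, the condition $x^{g(x_0)-1} \equiv 1 \pmod p$ holds precisely when $d \mid g(x_0) - 1$, i.e. $g(x_0) \equiv 1 \pmod d$. Since $d \mid p-1$, the value of $g(x_0) \bmod d$ depends only on $x_0 \bmod d$, so I would first count the residues $z \in \Z/d\Z$ with $g(z) \equiv 1 \pmod d$ --- there are $N_{g-1}(d)$ of these by definition --- and then note that the reduction map $\Z/(p-1)\Z \to \Z/d\Z$ is surjective with every fiber of size $(p-1)/d$. Multiplying gives $N_{g-1}(\ord_p x)\,(p-1)/\ord_p x$ admissible values of $x_0$, as claimed.

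The only place demanding genuine care is the initial reduction: one must be sure that the interpolated power $\oneunit{x}^{g(x)}$ --- not merely an ordinary integer power --- lands in $1 + p\Zp$, which is exactly what the exponential--logarithm representation from the interpolation proposition guarantees. After that reduction, both counts are routine facts about cyclic groups and the fibers of the reduction map $\Z/(p-1)\Z \to \Z/d\Z$, so I expect no further obstacle.
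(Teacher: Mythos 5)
Your proposal is correct and follows essentially the same route as the paper's proof: reduce modulo $p$ using $\oneunit{x} \equiv 1 \pmod p$ and $\omega(x) \equiv x \pmod p$, count solutions of $x^{g(x_0)-1} \equiv 1 \pmod p$ in the cyclic group $(\Z/p\Z)^\times$ via the $\gcd$, and for the reverse count use that $g(x_0) \bmod \ord_p x$ depends only on $x_0 \bmod \ord_p x$ together with the fiber size $(p-1)/\ord_p x$ of the reduction map. The only cosmetic difference is that you justify $\oneunit{x}^{g(x)} \equiv 1 \pmod p$ explicitly via the exponential--logarithm representation, which the paper leaves implicit.
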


\begin{remark}  For $p=2$ a similar theorem can be proved, but this is not necessary for solving~\eqref{eqn:spfpn}.
\end{remark}

\begin{proof} We know that
  $\oneunit{x} \equiv 1 \pmod{p}$, so the congruence reduces to 
\begin{equation} \label{modp}
\omega(x)^{g(x_0)} \equiv x \pmod{p}.
\end{equation}
For fixed $x_0$, since $\omega(x) \equiv x \pmod p$ by definition, equation (\ref{modp}) has a solution if and only if
$$
\omega(x)^{g(x_0)-1} \equiv 1 \pmod{p}.
$$
This congruence is satisfied for exactly the
$x \in (\Z / p \Z)^\times$ for which $\mbox{ord}_p(x)$ divides $g(x_0)-1$.  There will be
$\gcd(p-1,g(x_0)-1)$ such values for $x$ in the cyclic group
$(\Z / p \Z)^\times$.

On the other hand, if $x$ is fixed, then $\ord_p(x)$ divides
$g(x_0)-1$ if and only if $g(x_0)-1 \equiv 0 \pmod{\ord_p(x)}$.  There
are $N_{g-1}(\ord_p{x})$ such values of $x_0$ in $\Z/(\ord_p x)\Z$ and
$N_{g-1}(\ord_p{x}) ({p-1})/{\ord_p{x}}$ such values of $x_0$ in
$\Z/(p-1)\Z$.
\end{proof}

Next we use the Chinese Remainder Theorem to get the following
corollary to Theorem~\ref{x0fixedpt}. 

\begin{corollary} \label{solnsmodp} Let $p$ be a prime.  Then there are  
$$\sum_{x_0=1}^{p-1} \gcd(p-1,g(x_0)-1) = \sum_{d | p-1} \phi(d) ((p-1)/d) N_{g-1}(d) $$ solutions $x$ to the congruence
$$x^{g(x)}  \equiv x \pmod p$$
where $1 \le x \le p(p-1)$ and $p \nmid x$.
\end{corollary}
\begin{proof}
For $p=2$, this is just the statement that there is one solution modulo $2$.  Otherwise,
Theorem~\ref{x0fixedpt} implies that for each choice of $x_0 \in \Z/(p-1)\Z$, there are $\gcd(p-1, g(x_0)-1)$ elements
$x_1 \in (\Z/p\Z)^\times$ 
with the property that
$$\omega(x_1)^{g(x_0)} \oneunit{x_1}^{g(x_1)} \equiv x_1 \pmod{p}. $$
By the Chinese Remainder Theorem, there will be exactly one  $x \in \Z/p(p-1)\Z$ such that
$x \equiv x_0 \pmod {p-1}$ and $x \equiv x_1 \pmod{p}$. 
By the interpolation we set up in the introduction, since $x \equiv x_0 \pmod{p-1}$, we know that
for each such $x$:
$$x^{g(x)} =\omega(x)^{g(x_0)} \oneunit{x}^{g(x)} \equiv \omega(x_1)^{g(x_0)} \oneunit{x_1}^{g(x_1)} \equiv x_1 \equiv x \pmod {p}. $$
Finally, since exactly $\gcd(p-1, g(x_0)-1)$ such $x$ exist for each $x_0$, we have $\sum_{x_0=1}^{p-1} \gcd(p-1,g(x_0)-1)$ solutions to the congruence.  

Alternatively, for each choice of $x_1 \in (\Z/p\Z)^\times$ of
multiplicative order $d$ modulo $p$, there are $((p-1)/d) N_{g-1}(d)$
values of $x_0 \in \Z/(p-1)\Z$ satisfying the congruence and
$\phi(d)$ choices of $x_1$ with multiplicative order $d$ for each $d
\mid (p-1)$.  (The equality of the two sums also follows from
\cite[Theorem 1]{toth}). 
\end{proof}

 Next we consider $p$-adic solutions to our equation for $x$ such that
 $g(x) \not \equiv 1 \pmod{p}$.  These are the cases where the
 solutions are nonsingular modulo $p$ and thus lift uniquely to solutions modulo $p^e$ and hence to $\Z_p$. We will treat $p \neq 2$ completely and then treat $p=2$.
 
\begin{theorem} \label{nonsinglift} Let $p$ be a prime, $p \neq 2$.  Then there are  
$$
\left\{ \sum_{x_0=1}^{p-1} \gcd(p-1,g(x_0)-1) \right\} - \left\{\sum_{g(x_1)\equiv 1 \pmod{p}} N_{g-1}(\ord_p(x_1)) \frac{p-1}{\ord_p(x_1)}\right\}$$
$$= \sum_{d \mid p-1} \abs{\set{x_1 \in (\Z/p\Z)^\times \mid g(x_1) \not \equiv 1 \pmod{p},\ \ord_p(x_1) = d}}\ \frac{p-1}{d}\ N_{g-1}(d)
$$
solutions $x$ to the congruence 
\begin{equation}\label{modpe}
x^{g(x)}  \equiv x \pmod {p^e}
\end{equation}
where $1 \le x \le p^e(p-1)$ such that $p \nmid x$ and $g(x) \not
\equiv 1 \pmod p$.

These are in one-to-one correspondence with the solutions  $(x,
x_0) \in \Zp \times 
\set{1, \ldots, p-1}$ to the equation
 $$\omega(x)^{g(x_0)}\oneunit{x}^{g(x)} = x$$
such that $p \nmid x$ and $g(x) \not
\equiv 1 \pmod p$.
\end{theorem}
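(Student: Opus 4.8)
The plan is to count the solutions here as precisely those solutions modulo $p$ from Corollary~\ref{solnsmodp} that are \emph{nonsingular} modulo $p$, each of which lifts uniquely modulo $p^e$ by Proposition~\ref{hensel-one}. First I would fix $x_0 \in \Z/(p-1)\Z$ and a residue $x_1 \in (\Z/p\Z)^\times$, and restrict attention to the disk $x_1 + p\Zp \subseteq \Zp^\times$, on which $\omega(x) = \omega(x_1)$ is constant and $f_{x_0}(x) = \omega(x_1)^{g(x_0)}\oneunit{x}^{g(x)} - x$ is given by a single restricted power series (this follows from the analyticity established in Section~\ref{interpolation} together with the convergence of $\exp$ and $\log$, so that Proposition~\ref{hensel-one} applies). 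For $x \equiv x_0 \pmod{p-1}$ the congruence~\eqref{modpe} is then exactly $f_{x_0}(x) \equiv 0 \pmod{p^e}$.

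The key step is to identify the singular locus explicitly. Since $\oneunit{x} = x/\omega(x)$ and $\omega$ is locally constant, on the disk we have $\tfrac{d}{dx}\oneunit{x} = 1/\omega(x_1)$ and hence $\tfrac{d}{dx}\log\oneunit{x} = 1/x$, giving
$$f_{x_0}'(x) = \omega(x)^{g(x_0)}\oneunit{x}^{g(x)}\left(g'(x)\log\oneunit{x} + \frac{g(x)}{x}\right) - 1.$$
Reducing modulo $p$ and using $\oneunit{x} \equiv 1$, $\log\oneunit{x} \equiv 0$, together with the fact (from the proof of Theorem~\ref{x0fixedpt}) that at any solution $\omega(x)^{g(x_0)} \equiv x \pmod p$, this collapses to $f_{x_0}'(x) \equiv g(x) - 1 \pmod p$. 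Thus a solution modulo $p$ is nonsingular precisely when $g(x) \not\equiv 1 \pmod p$, which is the hypothesis of Proposition~\ref{hensel-one} and the condition appearing in the statement.

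With the singular locus identified, I would apply Proposition~\ref{hensel-one} disk by disk: each solution $x_1$ modulo $p$ with $g(x_1) \not\equiv 1 \pmod p$ lifts to a unique $p$-adic root $\xi \equiv x_1 \pmod p$, whose reduction modulo $p^e$ is the unique solution modulo $p^e$ in that disk, and conversely every such solution modulo $p^e$ reduces to a nonsingular $x_1$. Since the condition $g(x) \not\equiv 1 \pmod p$ depends only on $x \bmod p$, this yields, for each fixed $x_0$, a bijection between the nonsingular solutions modulo $p$ and the solutions modulo $p^e$ with $x \equiv x_0 \pmod{p-1}$. Reassembling over all $x_0$ by the Chinese Remainder Theorem exactly as in Corollary~\ref{solnsmodp}, the number of solutions of~\eqref{modpe} equals the total number of solutions modulo $p$, namely $\sum_{x_0} \gcd(p-1, g(x_0)-1)$, minus the number of singular ones.

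It remains to count the singular solutions. These are the residues $x_1$ with $g(x_1) \equiv 1 \pmod p$, and by the alternative count in Theorem~\ref{x0fixedpt} each such $x_1$ occurs as a solution for exactly $N_{g-1}(\ord_p x_1)\,\frac{p-1}{\ord_p x_1}$ values of $x_0$; summing over these $x_1$ produces the subtracted term and hence the first displayed equality. The second equality is then a reindexing of the nonsingular count: grouping the admissible $x_1$ (those with $g(x_1) \not\equiv 1$) according to their common order $d = \ord_p x_1 \mid p-1$ and recording that each contributes $\frac{p-1}{d} N_{g-1}(d)$ values of $x_0$. I expect the derivative computation isolating $g(x) - 1$ to be the main obstacle, since it is what turns the abstract nonsingularity hypothesis of Proposition~\ref{hensel-one} into the explicit arithmetic condition $g(x) \not\equiv 1 \pmod p$; the Hensel lift and the regrouping of the sums are then routine.
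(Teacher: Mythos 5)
Your proposal is correct and follows essentially the same route as the paper: subtract the singular solutions (those with $g(x_1)\equiv 1 \pmod p$, counted via the alternative count in Theorem~\ref{x0fixedpt}) from the total of Corollary~\ref{solnsmodp}, show via the derivative computation $f_{x_0}'(x)\equiv g(x)-1 \pmod p$ at solutions that the remaining ones are nonsingular and hence lift uniquely by Proposition~\ref{hensel-one}, and reassemble with the Chinese Remainder Theorem. The only cosmetic difference is that you obtain the derivative by the chain rule using $\tfrac{d}{dx}\log\oneunit{x}=1/x$ on a disk where $\omega$ is constant, whereas the paper differentiates the exponential series term by term and lumps the $p$-divisible terms; both yield the same reduction modulo $p$.
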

\begin{proof} 

For the cases where $g(x_1) \equiv 1 \pmod p$, $x_1^{g(x_0)-1} \equiv 1 \pmod p$ for all $x_0 \in \Z/(p-1)\Z$
such that $\ord_p (x_1) \mid (g(x_0)-1)$. There will be $N_{g-1}(\ord_p(x_1)) (p-1)/\ord_p(x_1)$ such values of $x_0$.
Now by the Chinese Remainder Theorem, there will be 
the same number of
%$N_{g-1}(\ord_p(x_1)) %(p-1)/\ord_p(x_1)$
values for $x$ with $1 \le x \le p(p-1)$
where $p \nmid x$ and $g(x) \equiv 1 \pmod p$.

% Now we have left to show that for a fixed $x_0 \in \Z/(p-1)\Z$, any solution with $g(x_1) \not \equiv 1 \pmod p$ to
% the equation 
% \begin{equation} \label{thm5modp}
% \omega(x)^{g(x_0)} \oneunit{x}^{g(x)} \equiv x \pmod p
% \end{equation}
%  will lift to a unique solution in $\Zp$.
% This result will imply by the Chinese Remainder Theorem that the desired number of solutions to 
% %$x^{g(x)}  \equiv x %\pmod{p^e}$, 
% equation~(\ref{modpe})
% %where
% %we allow $x \in \{ 1, 2, %\ldots p^e(p-1)\} $ such %that $p \nmid x$ and %$g(x) \not \equiv 1 %\pmod p$, 
% will be exactly the
% number of solutions when $e=1$. 

%Now for a fixed $x_0 \in \Z/(p-1)\Z$, we assume 

Fix $x_0 \in \Z/(p-1)\Z$, and consider the function $f_{x_0}: \Z_p \rightarrow \Z_p$ given by $f_{x_0}(x) = \omega(x)^{g(x_0)}\oneunit{x}^{g(x)} - x.$  Note that \begin{eqnarray*}f_{x_0}(x) 
%&=& \omega(x)^{g(x_0)}\oneunit{x}^{g(x)} - x \\ &=& %\omega(x)^{g(x_0)}(\exp(g(x)\log\oneunit{x})) - x \\ 
&=& \omega(x)^{g(x_0)}\left(1 + g(x)\log\oneunit{x} + \frac{g(x)^2(\log\oneunit{x})^2}{2!} + \cdots\right) - x.
\end{eqnarray*} 
Now $\log\oneunit{x} \in p\Z_p$, so \begin{eqnarray*}
  f_{x_0}^\prime(x) %&=& \omega(x)^{g(x_0)}
%  \left((g'(x_0)\log\oneunit{x} + g(x)/x)+ (\text{terms containing
 %     $p$}) \right) - 1 \\ f_{x_0}^\prime(x) & \equiv &
  %\omega(x)^{g(x_0)}g(x)/x - 1 \pmod{p} \\ 
  & \equiv & x^{g(x_0)-1}g(x)
  - 1 \pmod{p} . %\; \; (\text{since $\omega(x) \equiv x \pmod{p}$}).
\end{eqnarray*}  

Suppose we have a solution $x_1 \in (\Z/p\Z)^\times$ to
\begin{equation} \label{thm5modp}
 \omega(x)^{g(x_0)} \oneunit{x}^{g(x)} \equiv x \pmod p
 \end{equation}
%equation~(\ref{thm5modp}) 
such that $g(x_1) \not \equiv 1 \pmod{p}$. 
%and
%$\omega(x_1)^{g(x_0)} %\oneunit{x_1}^{g(x_1)} %\equiv x_1 \pmod p $.  %Again, since $\omega(x_1) \equiv x_1 \pmod{p}$ and $\oneunit{x_1} %\equiv 1 \pmod{p}$, this gives us $x_1^{g(x_0)} \equiv x_1 %\pmod{p}.$  Hence, 
Then
\begin{eqnarray*}f_{x_0}^\prime(x_1) %& \equiv & x_1^{g(x_0)-1}g(x_1) - 1 \pmod{p}\\ 
& \equiv & g(x_1) - 1 \not\equiv 0\pmod{p}. \end{eqnarray*}
%Since $g(x_1) \not \equiv 1 \pmod{p}$, we have that $f_{x_0}^\prime(x_1) \not %\equiv 0 \pmod{p}.$
%
By Proposition~\ref{hensel-one}, for fixed $x_0 \in \Z/(p-1)\Z$, each %solution 
$x_1$
%with $g(x_1) \not \equiv %1 \pmod p$ to %equation~(\ref{thm5modp})
%the equation 
%$$
%\omega(x)^{g(x_0)} %\oneunit{x}^{g(x)} %\equiv x \pmod p
%$$
will lift to a unique root of
%$\omega(x)^{g(x_0)} %\oneunit{x}^{g(x)} -x$
$f_{x_0}(x)$
in $\Zp$. This root in $\Zp$ will correspond to one solution to equation
(\ref{modpe}) for each $e$. Putting these results together with
Corollary~\ref{solnsmodp} and the Chinese Remainder Theorem, and taking out the solutions where
$g(x) \equiv 1 \pmod p$, we have our theorem.

The second summation follows by noting that for each choice of $x_1 \in (\Z/p\Z)^\times$ of multiplicative order $d$ modulo $p$ such that $g(x_1) \not \equiv 1$ modulo $p$, there are $((p-1)/d) N_{g-1}(d)$ values of $x_0 \in \Z/(p-1)\Z$ satisfying the congruence.

\end{proof}

\begin{corollary}\label{nonsingxn}
Let $p$ be a prime $p \neq 2$, then there are 
$$
\left\{ \sum_{x_0=1}^{p-1} \gcd(p-1,x_0^n-1) \right\} - \left\{
  \sum_{x_0=1}^{p-1} \gcd(p-1,n, x_0^n-1) \right\}$$

$$ 
=\left\{\sum_{d \mid p-1} \phi(d)\ \frac{p-1}{d}\ N_{x^n-1}(d)\right\}
- \left\{\sum_{d \mid \gcd(n,p-1)} \phi(d)\ \frac{p-1}{d}\ N_{x^n-1}(d)\right\}
$$
solutions $x$ to the congruence 
\begin{equation}
x^{x^n}  \equiv x \pmod {p^e}
\end{equation}
where $1 \le x \le p^e(p-1)$ such that $p \nmid x$ and $x^n \not
\equiv 1 \pmod p$.

% In particular, when $n=1$ there are
% $$
% \left\{ \sum_{x_0=1}^{p-1} \gcd(p-1,x_0-1) \right\} - (p-1)
% $$
% $$ 
% =\left\{\sum_{d \mid p-1} \phi(d)\ \frac{p-1}{d}\right\}
% - (p-1)
% $$
% solutions $x$ to the congruence 
% \begin{equation}
% x^{x}  \equiv x \pmod {p^e}
% \end{equation}
% where $1 \le x \le p^e(p-1)$ such that $p \nmid x$ and $x \not \equiv
% 1 \pmod p$. 
%\todo[author=Josh]{``These are in one-to-one\ldots'' with $p$-adic?}
\end{corollary}

\begin{proof}
Let $g(x) = x^n$.  Then for each choice of $x_0$ in Theorem~\ref{x0fixedpt},
there are $\gcd(p-1, n, x_0^n-1)$ elements
$x_1 \in (\Z/p\Z)^\times$ 
with the property that both
$\omega(x)^{x_0^n-1} \equiv 1 \pmod{p}$
and $g(x) = x^n \equiv 1 \pmod p$, since $\omega(x) \equiv x \pmod{p}$, and
thus these are together equivalent to
$$\omega(x)^{\gcd(n, x_0^n-1)} \equiv 1 \pmod{p}.$$

On the other hand, $g(x) \equiv 1$ modulo $p$ is equivalent to $\ord_p(x) \mid n$, which is equivalent to $\ord_p(x) \mid \gcd(n, p-1)$.
So in the previous theorem, $$\abs{\set{x_1 \in (\Z/p\Z)^\times \mid
    g(x_1) \not \equiv 1 \pmod{p},\ \ord_p(x_1) = d}} = \phi(d)$$ if
$d$ divides $p-1$ but not $\gcd(n, p-1)$ and $0$ otherwise. 
\end{proof}

Now we need to specialize exclusively to $g(x) = x^n$ in order to consider the situation when $g(x) \equiv 1 \pmod p$ and the
solutions are singular modulo $p$. Recall that $q=p$ when $p$
 is odd, and $q=4$ when $p=2$. 

%\todo[noline, author=Josh]{Should the next two definitions be
%  combined?  Reversed order?}
\begin{definition} \label{Gae}
Given some $a \in \Z_p$. Let $G_{a,e}$ equal the set of solutions $x$ to the equation 
$$x^{x^n}  \equiv x \pmod {p^e}$$
where $1 \le x \le p^e(p-1)$ such that $p \nmid x$ and $x \equiv a
\pmod q$.%\todo[author=Josh]{When $a$ is
 % not (congruent to) an $n$th root of unity then we are in the
 % nonsingular case?} 
\end{definition}

\begin{definition} \label{Gainf}
Given some $a \in \Z_p$. Let $G_{a, \infty}$ %\todo[author=Josh]{Not sure I like that notation.}{}   
  equal the set of solutions $(x, x_0) \in \Zp \times
\set{1, \ldots, p-1}$
 to the equation
$$\omega(x)^{x_0^n}\oneunit{x}^{x^n} = x$$
such that $p \nmid x$ and $x \equiv a \pmod q$.
%\todo[author=Josh]{Want to allow
 % $0 \leq a \leq q-1$ in $\Z$ but also $a$ an $n$th root of unity in
%  $\Zp$?  Is this clear?}
\end{definition}

\begin{theorem} \label{singlift} Let $p$ be a prime, $p \neq 2$, 
%$  p \nmid n$, 
  and let $\xi \in \Zp$ be an $n$th root of unity. Then  
% $$
%  \left|G_{\xi, e}\right|=\frac{p-1}{\ord_p(\xi)} N_{x^n-1}(\ord_p(\xi)) \cdot p^{\lfloor e/2 \rfloor}
% $$
% and
% $$
%  \left|G_{\xi, \infty}\right|=\frac{p-1}{\ord_p(\xi)} N_{x^n-1}(\ord_p(\xi)).
% $$
%\todo[author=Josh]{Reversed order?}
% Then  
$$
 \left|G_{\xi, e}\right|=\frac{p-1}{\ord_p(\xi)}
 N_{x^n-1}(\ord_p(\xi)) \cdot 
\begin{cases}
p^{e-1} & \text{if~}  e \leq v_p(n)\\  
p^{\lfloor (e+v_p(n))/2 \rfloor} & \text{if~} e \geq v_p(n)+1
\end{cases}$$
and 
$$
 \left|G_{\xi, \infty}\right|=\frac{p-1}{\ord_p(\xi)}
 N_{x^n-1}(\ord_p(\xi)).$$
\end{theorem}

\begin{remark}
Note that in fact the two formulas for $\abs{G_{\xi, e}}$ are equal if $e=v_p(n)+1$ or
$e=v_p(n)+2$.
\end{remark}

\begin{remark}
 If $p \nmid a$ and $a \not \equiv \xi \pmod q$ for $\xi$ equal to some $n$th root of unity in $\Zp$ then see Corollary~\ref{nonsingxn} for $|G_{a,e}|$ and $|G_{a,\infty}|$.
\end{remark}
\begin{proof}
  Consider $x \in \Zp$ such that $x \equiv \xi$ modulo $p$.  Let $1 \leq x_0 \leq p-1,$ and
  let $f_{x_0}(x) = \omega(x)^{x_0^n}\oneunit{x}^{x^n} - x.$ Since we
  are assuming $\xi$ is an $n$th root of unity, we have that
  $\omega(x) = \xi$. We noted in the proof of Theorem~\ref{x0fixedpt} that if
  $\ord_p(\xi)=\ord_p(x)$ does not divide $x_0^n-1$, then there are no
  solutions to $f_{x_0}(x) \equiv 0$ modulo $p$ and thus no solutions
  to $f_{x_0}(x) \equiv 0 \pmod {p^e}$ for any positive integer
  $e$. Thus, we assume that $\ord_p(\xi)$ divides $x_0^n-1$, so
  $\omega(x)^{x_0^n} = \xi^{x_0^n} = \xi$.  There are
  $[(p-1)/\ord_p(\xi)] N_{x^n-1}(\ord_p(\xi))$ such values of
$x_0$.

We have that
\begin{eqnarray} \label{fx0}
f_{x_0}(x) %&=& \xi \oneunit{x}^{x^n}-x \\
%= \xi \exp(x^n\log \oneunit{x}) - x \\
%\end{equation}   
%Also, note that 
%$$f_{x_0}^\prime(x) = \xi %\exp(x^n \log %\oneunit{x})(nx^{n-1}\log %\oneunit{x} +x^{n-1})-1$$ and 
%$$f_{x_0}^{\prime\prime}(x) = \xi %\exp(x^n\log %\oneunit{x})\left((nx^{n-1}\log %\oneunit{x}+x^{n-1})^2 + %n(n-1)x^{n-2}\log \oneunit{x} + %(2n-1)x^{n-2}\right).$$  
%Since $\xi^n = 1$ and %$\oneunit{\xi}=1$, we have %$f_{x_0}(\xi) = 0$, %$f'_{x_0}(\xi)=0$, and %$f''_{x_0}(\xi) = \xi^{2n-1} + %(2n-1)\xi^{n-1} = 2n\xi^{-1}$.
%
%The Taylor series expansion for $f_{x_0}(x)$ centered at %$x=\xi$ is therefore 
%\begin{eqnarray*} f_{x_0}(x) 
%&=& f_{x_0}(\xi) + %f_{x_0}^\prime(\xi)(x-\xi) + %\frac{f_{x_0}^{\prime\prime}(\xi)%}{2!}(x-\xi)^2 \\
%&\quad & \quad +  \ (\text{higher %powers of $(x-\xi)$}) \\ &=& 0 + %0(x-\xi) + n\xi^{-1}(x-\xi)^2 + %(\text{higher powers of %$(x-\xi)$}) \\ 
\notag &=& n\xi^{-1}(x-\xi)^2 + n\  (\text{higher powers of $(x-\xi)$}) 
\end{eqnarray}
and $v_p(f_{x_0}(x)) %= v_p( n\xi^{-1}(x-\xi)^2) =  
= 2v_p(x-\xi)+v_p(n)$.
Let $\ell = v_p(n)$, the $p$-adic valuation of $n$.  If $1 \leq e \leq \ell+2$, note that for all $x$
such that $1 \leq x \leq p^e$ and $x \equiv \xi \pmod{p}$,
$v_p(f_{x_0}(x)) \geq 2 
+ \ell \geq e$, so $f_{x_0}(x) \equiv 0 \pmod {p^e}$.  There are
$p^{e-1}$ such values of $x$, so there are $p^{e-1}$ solutions to
$f_{x_0}(x) \equiv 0 \pmod {p^e}$ for every solution $\xi$ modulo $p$.

Now we induct on $e$, using $e=\ell+1$ (proved above) as the base case.  %We already
%showed that there are  $p^{\ell}$ solutions to
%$f_{x_0}(x) \equiv 0 \pmod {p^{\ell+1}}$, 
%and $p^{\ell} = p^{\floor{(\ell+1+\ell)/2}}$, 
%so the base case agrees with the
%formula.  
Assume by way of induction that $f_{x_0}(x) \equiv 0$ modulo $p^e$.
Now consider a solution $x$ modulo $p^e$; %we want to know how many
%solutions it lifts to modulo $p^{e+1}$.  E
each lifted solution modulo $p^{e+1}$ looks like
$x+tp^e$ for some $0 \leq t < p$.  Modulo $p^{e+1}$, $f_{x_0}(x+tp^e)
\equiv f_{x_0}(x)  + tp^e f'_{x_0}(x)$ by Taylor series expansion
around $x$.  
%We are assuming $f_{x_0}(x) \equiv 0$ modulo $p^e$, so we
%can divide though by $p^e$.  
Then $f_{x_0}(x+tp^e) \equiv 0$ modulo
$p^{e+1}$ if and only if $t f'_{x_0}(x) \equiv -f_{x_0}(x)/p^e$ modulo
$p$.  Since $f'_{x_0}(x) \equiv 0$ modulo $p$, there are
either $p$ solutions, if $f_{x_0}(x)/p^e \equiv 0$ modulo $p$, or no
solutions if not. 

%For $z \in \Zp$, let $v_p(z)$ be the $p$-adic valuation of $z$. Then,
Using the Taylor expansion above, 
%and the assumption that $p \nmid n$,
$2v_p(x-\xi) = v_p(f_{x_0}(x)) -v_p(n) \geq e-\ell$.  
%We assumed $f_{x_0}(x) \equiv 0$
%modulo $p^e$, so that's equivalent to $2v_p(x-\xi) \geq e$.
%
Suppose %$e-\ell$ is odd, so 
$e-\ell=2k-1$ for some positive integer $k$.  Then
$2v_p(x-\xi) \geq 2k-1$ implies 
%$v_p(x-\xi) \geq k$, or
$v_p(f_{x_0}(x))\geq 2k+\ell=e+1$.  Thus $p^{e+1} \mid f_{x_0}(x)$, and
$x$ lifts to $p$ solutions modulo $p^{e+1}$. 

Now suppose %$e-\ell$ is even, so 
$e-\ell=2k$ for some positive integer $k$.  By
the preceding argument, $v_p(f_{x_0}(x)) -v_p(n)\geq e-\ell = 2k$ if and only
if $v_p(x-\xi) \geq k$, and $v_p(f_{x_0}(x)) -v_p(n)\geq e-\ell+1 = 2k + 1$ if and
only if $v_p(x-\xi) \geq k+1$.  We are assuming
%$x$ is any number such that 
$v_p(f_{x_0}(x)) \geq e$, which thus is equivalent to
$x = \xi + a_k p^k + \alpha p^{k+1}$ for some $0 \leq a_k \leq p-1$
and $\alpha$ in $\Zp$.  Thus $v_p(x-\xi) \geq k+1$ if and only if
$a_k = 0$.  In that case, $x$ lifts to $p$ solutions modulo $p^{e+1}$,
otherwise it does not lift.  In the limit, for each $x_0$ above the only
solution in $\Zp$ 
is where $a_k = 0$ for all $k$, that is, $x = \xi$.

Combining the lifting for $e \geq \ell+1$ with the base case gives
$p^{\floor{(e-\ell)/2}}$ solutions modulo $p^e$ for every solution
modulo $p^{\ell+1}$ and thus
$p^{\floor{(e-\ell)/2}}p^\ell = p^{\floor{(e+\ell)/2}}$ solutions
modulo $p^e$ for each solution modulo $p$, but only one solution in
$\Zp$.

To count $G_{\xi, e}$
%the number of solutions to $x^{x^n}-x \equiv 0 \pmod {p^e}$
%where $1 \le x \le p^e(p-1)$ and $x \equiv \xi\pmod p$, 
we must use the
Chinese Remainder Theorem to argue that for each of the values of
$x_0$ above and for each of the
$p^{\lfloor (e+\ell)/2 \rfloor}$ solutions $x_1$ to $\omega(x)^{x_0^n}x^{x^n}-x \pmod {p^e}$
where $1 \le x_1 \le p^e$ and $x_1 \equiv \xi \pmod p$, there will be
exactly one such $x$ where $1 \le x \le p^e(p-1)$ and
$x \equiv \xi \pmod p$.  The formulas follow.
%Hence,
%$\left|G_{\xi,e}\right|=\frac{(p-1)}{\ord_p(\xi)} N_{x^n-1}(\ord_p(\xi))\cdot p^{\lfloor e/2 \rfloor}.$
\end{proof}

Now combining our results from Corollary~\ref{nonsingxn}
and Theorem~\ref{singlift}, we have the following theorem for $p \not = 2$.

\begin{theorem} \label{solnpdivn} Let $p$ be a prime, $p \neq 2$ and $p
  \mid n$.  If $e\leq v_p(n)$, then there are
$$
\left\{ \sum_{x_0=1}^{p-1} \gcd(p-1,x_0^n-1) \right\} + 
\left\{
\sum_{x_0=1}^{p-1} \gcd(p-1,n, x_0^n-1) \cdot \left(p^{e-1}-1\right) \right\}
$$
$$= \left\{ \sum_{d \mid p-1}
  \phi(d)\left(\frac{p-1}{d}\right)N_{x^n-1}(d) \right\} + 
\left\{ \sum_{d \mid \gcd(n,p-1)} \phi(d)\left(\frac{p-1}{d}\right)N_{x^n-1}(d) \cdot
  \left(p^{e-1}-1\right) \right\}
$$
solutions $x$ to the congruence 
\begin{equation*}
x^{x^n}  \equiv x \pmod {p^e}
\end{equation*}
where $1 \le x \le p^e(p-1)$ such that $p \nmid x$.
If $e\geq v_p(n)+1$, then there are
$$
\left\{ \sum_{x_0=1}^{p-1} \gcd(p-1,x_0^n-1) \right\} + 
\left\{
\sum_{x_0=1}^{p-1} \gcd(p-1,n, x_0^n-1) \cdot \left(p^{\lfloor (e+v_p(n))/2
\rfloor}-1\right) \right\}
$$
$$= \left\{ \sum_{d \mid p-1}
  \phi(d)\left(\frac{p-1}{d}\right)N_{x^n-1}(d) \right\} + 
\left\{ \sum_{d \mid \gcd(n,p-1)} \phi(d)\left(\frac{p-1}{d}\right)N_{x^n-1}(d) \cdot
  \left(p^{\lfloor (e+v_p(n))/2 \rfloor}-1\right) \right\}
$$
solutions to the same congruence.  In either case there are  
only  
$$\sum_{x_0=1}^{p-1} \gcd(p-1,x_0^n-1) =  \sum_{d \mid p-1}
  \phi(d)\left(\frac{p-1}{d}\right)N_{x^n-1}(d)$$
solutions  $(x,
x_0) \in \Zp \times 
\set{1, \ldots, p-1}$ to the equation
 $$\omega(x)^{x_0^n}\oneunit{x}^{x^n} = x$$
such that $p \nmid x$.
\end{theorem}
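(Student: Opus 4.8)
The plan is to assemble this count exactly as Theorem~\ref{solnpnot2} was assembled, but replacing the singular input Theorem~\ref{singlift} by Theorem~\ref{pdivnlift}. The solutions $x$ to $x^{x^n} \equiv x \pmod{p^e}$ with $p \nmid x$ partition into two families according to whether $x^n \not\equiv 1 \pmod p$ (the nonsingular case) or $x \equiv \xi \pmod p$ for some $n$th root of unity $\xi \in \Zp$ (the singular case), and I would count each family separately and add. Since the two displayed cases ($e \leq v_p(n)$ and $e \geq v_p(n)+1$) differ only in the power of $p$ coming from Theorem~\ref{pdivnlift}, I would treat them uniformly and specialize at the end.

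First I would record the nonsingular contribution. By Corollary~\ref{nonsingxn}, the solutions with $x^n \not\equiv 1 \pmod p$ number
$$\sum_{x_0=1}^{p-1} \gcd(p-1, x_0^n-1) - \sum_{x_0=1}^{p-1} \gcd(p-1, n, x_0^n-1),$$
and this quantity is independent of $e$, since each such solution modulo $p$ lifts uniquely by the nonsingular Hensel argument underlying Theorem~\ref{nonsinglift}.

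Next I would assemble the singular contribution as $\sum_{\xi} \abs{G_{\xi,e}}$, the sum running over the $n$th roots of unity $\xi \in \Zp$; these residue classes are disjoint modulo $p$ and exhaust the singular solutions. Writing $M_e = p^{e-1}$ when $e \leq v_p(n)$ and $M_e = p^{\floor{(e+v_p(n))/2}}$ when $e \geq v_p(n)+1$, Theorem~\ref{pdivnlift} gives $\abs{G_{\xi,e}} = \frac{p-1}{\ord_p(\xi)} N_{x^n-1}(\ord_p(\xi)) \cdot M_e$. Grouping the roots of unity by $d = \ord_p(\xi)$, with $\phi(d)$ of them for each $d \mid \gcd(n,p-1)$, I obtain a singular total of $M_e \sum_{d \mid \gcd(n,p-1)} \phi(d)\frac{p-1}{d}N_{x^n-1}(d)$. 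The double-counting argument of Theorem~\ref{x0fixedpt} and Corollary~\ref{solnsmodp}, now restricted to those $x$ with $x^n \equiv 1 \pmod p$, identifies this inner sum with $\sum_{x_0=1}^{p-1} \gcd(p-1, n, x_0^n-1)$, which simultaneously establishes the equality of the two displayed forms.

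Adding the two families then yields
$$\sum_{x_0=1}^{p-1} \gcd(p-1, x_0^n-1) + (M_e - 1)\sum_{x_0=1}^{p-1} \gcd(p-1, n, x_0^n-1),$$
where the $-1$ absorbs the $e=1$ singular solutions (note $M_1 = p^{0} = 1$) already counted inside the first sum; specializing $M_e$ to each case gives the two stated formulas. I expect no genuine obstacle, since all of the analytic lifting has been carried out in the prior results. The only point requiring care is the bookkeeping that matches the root-of-unity-indexed sum $\sum_\xi \abs{G_{\xi,e}}$ against the $x_0$-indexed gcd sums; this rests on the fact that distinct $n$th roots of unity in $\Zp$ reduce to distinct elements of $(\Z/p\Z)^\times$ whose multiplicative orders divide $\gcd(n,p-1)$, so that the Chinese Remainder Theorem packaging inside Theorem~\ref{pdivnlift} aligns cleanly with the counts of Corollary~\ref{nonsingxn}.
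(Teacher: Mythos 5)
Your proposal is correct and takes essentially the same route as the paper: the paper proves Theorem~\ref{solnpdivn} exactly in parallel with Theorem~\ref{solnpnot2}, i.e., by adding the $e$-independent nonsingular count from Corollary~\ref{nonsingxn} to the singular count from Theorem~\ref{pdivnlift}, summed over the $\gcd(n,p-1)$ roots of unity $\xi$ grouped by $d=\ord_p(\xi)$. Your explicit identification of $\sum_{d \mid \gcd(n,p-1)} \phi(d)\frac{p-1}{d}N_{x^n-1}(d)$ with $\sum_{x_0=1}^{p-1}\gcd(p-1,n,x_0^n-1)$ is precisely the bookkeeping the paper leaves implicit.
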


\begin{proof}
This follows directly from
Corollary~\ref{nonsingxn} and Theorem~\ref{singlift}.  (Note that
there are $\gcd(p-1, n)$ elements of $\Zp$ which are $n$th roots of
unity, and each of
them is congruent to a unique integer modulo $p$.)
% The first versions then follow by using the equality from
% Corollary~\ref{solnsmodp} and noting that
% $$
%   \sum_{d \mid \gcd(n,p-1)} \phi(d) \left(\frac{p-1}{d}\right) N_{x^n-1}(d)$$
% $$= \frac{p-1}{\gcd(n,p-1)} \sum_{x_0=1}^{\gcd(n,p-1)}
% \gcd(\gcd(n,p-1),x_0^n-1)
% $$
% $$=\sum_{x_0=1}^{p-1} \gcd(n, p-1, x_0^n-1).$$
\end{proof}

When $p=2$, we will see that $f(x)=x^{x^n}-x$ is singular modulo $p$ for all odd values of $x$ where $1 \le x \le p^e$.
The following theorem is analogous to Theorem~\ref{singlift} 
%for $p \nmid n$ and to Theorem~\ref{pdivnlift}  for $p \mid n$ 
when $p=2$.

\begin{theorem} \label{p-is-2} Let $p=2$, $\xi=\pm 1$ in $\Zp$, and $n$ be a positive integer.
If $n$ is even, we have that
$$|G_{\xi,e}|= 
\begin{cases}
p^{e-2} & \text{if~}  2 \le e \le 4+v_p(n)\\
 p^{\lfloor (e+v_p(n))/2 \rfloor} & \text{if~} e \ge 5+v_p(n)
 \end{cases}
 $$
for all $e \ge 2$.

If $n$ is odd, we have that
$$
|G_{1,e}|= 
\begin{cases}
p^{e-2} & \text{if~}  2 \le e \le 4\\
 p^{\lfloor e/2 \rfloor} & \text{if~} e \ge 5  
 %p & \text{if~} e\ge 4 \text{~and~} \xi=-1
 \end{cases},
 \qquad
 |G_{-1,e}|= 
\begin{cases}
p^{e-2} & \text{if~}  2 \le e \le 3\\
 p & \text{if~} e\ge 4  
 \end{cases}
 $$
for all $e \ge 2$.

In all cases, $|G_{\xi, \infty}|=|\set{\xi}|=1$.
\end{theorem}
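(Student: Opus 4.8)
The plan is to specialize the interpolation of Section~\ref{interpolation} and the singular‑lifting analysis of Theorems~\ref{singlift} and~\ref{pdivnlift} to $p=2$, keeping careful track of the fact that now $q=4$. First I would record the simplifications peculiar to $p=2$: since $p-1=1$ the range is $1\le x\le 2^e$ and no Chinese Remainder step is needed, and by the remark in Section~\ref{interpolation} only the single residue $x_0=1$ occurs (with $g(x)=x^n$, so $g(x_0)=1$). Hence the relevant function is $f(x)=\omega(x)^{}\oneunit{x}^{x^n}-x$. On the class $x\equiv\xi\pmod 4$ the Teichm\"uller part is the constant $\omega(x)=\xi$, we have $\oneunit{x}=\xi^{-1}x\in 1+4\Zp$, and for integer $x$ in this class $x^{x^n}=\xi^{x^n}\oneunit{x}^{x^n}=\xi\exp(x^n\log\oneunit{x})$ because $x^n$ is odd; thus $f(x)=\xi\exp(x^n\log\oneunit{x})-x$ agrees with $x^{x^n}-x$ there. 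So $\abs{G_{\xi,e}}$ is the number of roots of $f$ modulo $2^e$ with $x\equiv\xi\pmod 4$, and the operative constraint is now $v_p(x-\xi)\ge 2$ rather than $\ge 1$.

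Next I would split according to whether $\xi$ is an $n$th root of unity. When $\xi^n=1$ — that is, $\xi=1$ for any $n$, and $\xi=-1$ when $n$ is even — the Taylor computation is exactly that of Theorems~\ref{singlift} and~\ref{pdivnlift}: since $\log\oneunit{\xi}=0$ we get $f(\xi)=f'(\xi)=0$ and $f''(\xi)=2n\xi^{-1}$, and the factorization $f^{(i)}(x)=nU(x)+(x^n-1)V(x)$ shows every Taylor coefficient $c_i$ with $i\ge 2$ has $v_p(f^{(i)}(\xi))\ge v_p(n)$. Because $v_p(x-\xi)\ge 2$, the quadratic term dominates all higher ones — the one genuinely $p=2$ verification needed is $2(i-2)>v_p(i!)$ for $i\ge 3$, which holds since $v_p(i!)=i-s_2(i)<i\le 2(i-2)$ for $i\ge 4$ and is immediate for $i=3$ — giving $v_p(f(x))=2v_p(x-\xi)+v_p(n)$. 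Counting residues with this valuation $\ge e$ then replays the induction of Theorem~\ref{pdivnlift}, shifted by the extra factor of $2$ from $q=4$: every one of the $p^{e-2}$ admissible residues already has $v_p(f(x))\ge 4+v_p(n)$, so all are solutions while $e\le 4+v_p(n)$, whereas for $e\ge 5+v_p(n)$ the condition $v_p(x-\xi)\ge\lceil(e-v_p(n))/2\rceil$ leaves exactly $p^{\floor{(e+v_p(n))/2}}$ of them. This yields the even‑$n$ formula for both $\xi$, and (taking $v_p(n)=0$) the stated $\abs{G_{1,e}}$ for odd $n$.

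The genuinely new situation, and the one I expect to be the main obstacle, is $\xi=-1$ with $n$ odd, where $\xi^n=-1\ne 1$ and the double‑root picture collapses. Here $\log\oneunit{-1}=0$ still forces $f(-1)=0$, but differentiating gives $f'(-1)=-(-1)^{n-1}-1=-2$, which is nonzero of valuation $1$, and a second differentiation gives $f''(-1)=2(n-1)$. Now the \emph{linear} term $-2(x+1)$ controls the valuation: for $x\equiv-1\pmod 4$ one has $v_p(x+1)\ge 2$, so each term $c_i(x+1)^i$ with $i\ge 2$ has valuation at least $2\,v_p(x+1)>1+v_p(x+1)$, and therefore $v_p(f(x))=1+v_p(x+1)$. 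The count is then immediate: while $e\le 3$ every admissible residue satisfies $v_p(f(x))\ge 3\ge e$, giving $p^{e-2}$ solutions, whereas for $e\ge 4$ the requirement $v_p(x+1)\ge e-1$ cuts the count to exactly $p^{\,e-(e-1)}=p$.

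I would finish by checking the boundary values — the two branches of the root‑of‑unity formula agree at $e=4+v_p(n)$, in the spirit of the remark following Theorem~\ref{pdivnlift} — and by flagging the two places needing the most care: the $p=2$ conventions ($q=4$, $\omega(x)=\pm1$, $\oneunit{x}=\xi^{-1}x$, and convergence of $\exp$ on $4\Zp$), and the fact that the failure of $-1$ to be an $n$th root of unity when $n$ is odd is precisely what replaces the quadratic vanishing of $f$ by a simple zero at the linear level, producing the eventual constant count of $p$.
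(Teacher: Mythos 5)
Your proposal is correct and follows essentially the same route as the paper: the same $p=2$ interpolation with $q=4$ and the single class $x_0=1$, the same split into $\xi^n=1$ (where $f$ vanishes to order two at $\xi$ and $v_2(f(x))=2v_2(x-\xi)+v_2(n)$) versus $\xi=-1$ with $n$ odd (where $f'(-1)=-2$ forces $v_2(f(x))=v_2(x+1)+1$), and the same count of residues $x\equiv\xi\pmod{4}$ whose valuation clears $e$. The only differences are cosmetic: you read the counts directly off the exact valuation formulas where the paper runs an induction on $e$, lifting solutions from $2^e$ to $2^{e+1}$, and your explicit verification that $2(i-2)>v_2(i!)$ for $i\ge 3$ spells out the domination of the quadratic (respectively linear) Taylor term that the paper asserts without comment.
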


\begin{remark}
Note that in fact for even $n$ when $\xi=\pm 1$ and for odd $n$ when $\xi=1$, the formulas for $|G_{\xi,e}|$ in the two cases are equal if $e=v_p(n)+3$ or $e=v_p(n)+4$. When $n$ is odd and $\xi=-1$, the two cases are equal if $e=3$. 
\end{remark}

\begin{proof}
%  Let $p=2$.  We want to count the number of solutions to
%  $f(x)=x^{x^n}-x \pmod {p^e}$ for odd values of $x$ where
%  $1 \le x\le {p^e}$. Because $\exp(x)$ for $p=2$ is only defined as a continuous function on
%  $1+q\Z_p$ and we are only interested in $x$ values that are odd, we can 
    We count solutions for $x \equiv 1 \pmod {q}$ and
  $x\equiv -1 \pmod {q}$ separately.  (Recall that $q=4$ when $p=2$.)
Thus
  $f(x) =\xi \oneunit{x}^{x^n} - x$ 
  %=\xi \exp(x^n \log \oneunit{x})-x$ 
  when $x \equiv \xi \pmod q$.
%  =\pm 1$ depending on whether $x \equiv \pm1 \pmod q$.
  %Note that $f(x)$ has the same form as $f_{x_0}$ in \eqref{fx0} %in Theorem~\ref{singlift}, so 
  The Taylor series for $f(x)$ centered at
  $\xi$ is
  $$f(x) = 0 + (\xi^n-1)(x-\xi) + \frac{1}{2!}(\xi^{2n-1} +
  (2n-1)\xi^{n-1})(x-\xi)^2 + (\text{higher powers of $(x-\xi)$}) .$$
  If $\xi=1$, the Taylor series reduces to
  $$f(x) = n(x-\xi)^2 + (\text{higher powers of $(x-\xi)$}) $$ for any
  $n$.  If $\xi=-1$ and $n$ is even, the Taylor series is
  $$f(x) = -n(x-\xi)^2 + (\text{higher powers of $(x-\xi)$}). $$
  Finally, if $\xi=-1$ and $n$ is odd, the Taylor series is
  $$f(x) = -2(x-\xi) + (n-1)(x-\xi)^2 + (\text{higher powers of
    $(x-\xi)$}). $$
  Thus we see that $f(x)$ is singular modulo $p$ for all odd $x$ where
  $1 \le x\le p^e$ and all $n$.
   
   After verifying the results for small $e$ and assuming that $f(x) \equiv 0 \pmod {p^e}$, the theorem follows for all odd $x$ by induction on $e$
   in each of the following two cases: (1) $x \equiv 1 \pmod q$ for arbitrary $n$ or $x \equiv -1 \pmod q$ for $n$ even and (2) $x \equiv -1 \pmod q$ for $n$ even. The arguments are similar to those in the proof of Theorem~\ref{singlift}.
\end{proof}
\begin{corollary} \label{p-is-2fixed} If $p=2$ and $n$ is a positive integer, then the number of solutions 
 to the congruence
$$x^{x^n}  \equiv x \pmod{p^e}$$
where $1 \le x \le p^e$ and $p \nmid x$ depends on the valuation $v_p(n)$.

When $n$ is even, the number of solutions is
$$
\begin{cases}
2p^{e-2} & \text{if~}  1 \le e \le 4+v_p(n) \\
2p^{\lfloor (e+v_p(n))/{2} \rfloor} & \text{if~}  e \ge 5+v_p(n).
\end{cases}
$$

When $n$ is odd, the number of solutions is
$$
\begin{cases}
2p^{e-2} & \text{if~} 1 \le e \le 3 \\
p^{\lfloor e/2 \rfloor}+p & \text{if~} e \ge 4.
\end{cases}
$$

In either case the only solutions in $\Zp$ to the equation
 $$\omega(x)\oneunit{x}^{x^n} = x$$
such that $p \nmid x$ are $x=1$ and $x=-1$.
\end{corollary}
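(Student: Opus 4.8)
The plan is to obtain Corollary~\ref{p-is-2fixed} as a direct bookkeeping consequence of Theorem~\ref{p-is-2}, with no further lifting analysis required. The key observation is that when $p=2$ we have $p-1=1$, so the range $1 \le x \le p^e(p-1)$ collapses to $1 \le x \le p^e$ and there is no auxiliary exponent class $x_0$ to sum over, in contrast to the odd-prime corollaries. Since $q=4$, every odd $x$ in $\{1,\dots,p^e\}$ is congruent to exactly one of $\xi=1$ or $\xi=-1$ modulo $q$, so the full solution set is the disjoint union of the sets counted by $G_{1,e}$ and $G_{-1,e}$. Thus for each parity of $n$ I would simply write the total count as $|G_{1,e}| + |G_{-1,e}|$ and substitute the piecewise formulas from Theorem~\ref{p-is-2}.

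For even $n$, Theorem~\ref{p-is-2} gives the \emph{same} value for $|G_{1,e}|$ and $|G_{-1,e}|$ on each range, so their sum is just twice that value: $2p^{e-2}$ when $2 \le e \le 4+v_p(n)$ and $2p^{\lfloor (e+v_p(n))/2 \rfloor}$ when $e \ge 5+v_p(n)$, matching the stated formula. For odd $n$ the two contributions differ and must be combined on matching ranges: on $2 \le e \le 3$ both equal $p^{e-2}$, yielding $2p^{e-2}$; for $e \ge 4$ one has $|G_{1,e}| = p^{\lfloor e/2 \rfloor}$ and $|G_{-1,e}| = p$, yielding $p^{\lfloor e/2 \rfloor} + p$. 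At this point I would invoke the remark following Theorem~\ref{p-is-2} to see that the two pieces of $|G_{1,e}|$ already agree at the boundary $e=4$, so the single expression $p^{\lfloor e/2 \rfloor}$ is valid for all $e \ge 4$; this is exactly what lets the combined formula switch cleanly at $e=4$ rather than at $e=5$.

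The one point requiring separate attention, and the only genuine (though mild) obstacle, is the value $e=1$, since Theorem~\ref{p-is-2} is stated only for $e \ge 2$. I would dispatch this by direct inspection: for $e=1$ the only odd $x$ with $1 \le x \le p^e = 2$ is $x=1$, and $1^{1^n} \equiv 1 \pmod 2$, so there is exactly one solution. This agrees with the claimed expression $2p^{e-2}$ evaluated at $e=1$ (reading $2p^{-1}$ as $1$), which is what justifies extending the theorem's ranges down to $1 \le e \le 4+v_p(n)$ for even $n$ and $1 \le e \le 3$ for odd $n$. Appending this base value to the theorem's output and assembling the resulting ranges then yields the two stated piecewise formulas, completing the argument.
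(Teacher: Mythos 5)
Your proposal is correct and follows exactly the route the paper intends: the corollary is stated without proof precisely because it is the immediate bookkeeping consequence of Theorem~\ref{p-is-2}, summing $|G_{1,e}|+|G_{-1,e}|$ over the two residue classes modulo $q=4$ and using the boundary agreement at $e=4$ (respectively $e=4+v_p(n)$) to merge the ranges. Your explicit treatment of the $e=1$ case, where Theorem~\ref{p-is-2} does not apply and one checks directly that $x=1$ is the unique solution (consistent with reading $2p^{e-2}$ as $1$), is a point the paper glosses over, so your write-up is if anything slightly more careful than the original.
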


\begin{remark}
  Note that in fact for even $n$, the formulas in the two cases modulo
  $p^e$ above are equal if $e=v_p(n)+3$ or $e=v_p(n)+4$, and when $n$
  is odd, the two cases are equal if $e=3$.
\end{remark}

\section{Rooted Closed Walks} \label{sec:rw}

As in~\cite{holden_robinson}, we will address longer cycles from the viewpoint of counting rooted closed walks.  We will later specialize to the case of two-cycles.

\begin{definition} For a fixed prime $p$, the ordered tuple $(x_1, \ldots, x_k)$ is a \emph{rooted
    closed walk of length $k$ modulo $p^e$ associated with the map $x \mapsto x^{g(x)}$}
if the $k$
  equations
\begin{align}\label{eqn:rwmodpe}
 \notag x_1^{g(x_1)} &\equiv x_2 \pmod{p^e},\\
 \notag x_2^{g(x_2)} &\equiv x_3 \pmod{p^e},\\
 &\vdots\\
\notag x_{k-1}^{g(x_{k-1})} &\equiv x_k \pmod{p^e},\\
\notag  x_k^{g(x_k)} &\equiv x_1 \pmod{p^e}
\end{align}
are satisfied.
\end{definition}

%We now turn to the question of finding simultaneous roots of the
%functions $x^{x^n} - y \pmod{p^e}$ and $y^{y^n} - x \pmod{p^e}$, where
For a positive integer $e$ and a prime $p$, we will allow
$x_1, \ldots, x_k \in \{ 1, 2, \ldots, p^e(p-1)\} $ such that
$p \nmid x_i$ for all $i$. We again fix $x_{01},\ldots, x_{0k} \in \Z/(p-1)\Z$ and
consider auxiliary functions
 \begin{equation} \label{eqn:sprwauxfns}
\omega(x_1)^{g(x_{01})} \oneunit{x_1}^{g(x_1)} - x_2 \bmod{p^e}, \quad
\text{\ldots}, \quad \omega(x_k)^{g(x_{0k})} \oneunit{x_k}^{g(x_k)} -x_1 \bmod{p^e}
\end{equation}
% \begin{equation} \label{eqn:sptcauxfns}
% \omega(x)^{g(x_0)} \oneunit{x}^{g(x)} - y \bmod p^e \quad
% \text{and} \quad 
% \omega(y)^{g(y_0)} \oneunit{y}^{g(y)} - x \bmod p^e
% \end{equation}
defined for a
polynomial $g$.

We will use the isomorphism 
 $$(\Z/p^e\Z)^\times \cong \mu_{p-1} \times (1+p\Z/p^e\Z)$$
 induced from the decomposition~\eqref{eq:decomposition} on
$\Zp^\times$.
% and the isomorphism
%$$(1+p\Z/p^e\Z) \cong \Z/p^{e-1}\Z This is not quite right!$$
% induced by the $p$-adic logarithm.
%The first isomorphism 
This isomorphism tells us that the equations
% \begin{equation}\label{eqn:sptcinterp}
% \omega(x)^{g(x_0)} \oneunit{x}^{g(x)} \equiv y \pmod{p^e} \quad
% \text{and} \quad \omega(y)^{g(y_0)} \oneunit{y}^{g(y)} \equiv x
% \pmod{p^e}
% \end{equation}
 \begin{equation} \label{eqn:sprwinterp}
\omega(x_1)^{g(x_{01})} \oneunit{x_1}^{g(x_1)} \equiv x_2 \pmod{p^e}, \quad
\text{\ldots}, \quad \omega(x_k)^{g(x_{0k})} \oneunit{x_k}^{g(x_k)} \equiv x_1 \pmod{p^e}
\end{equation}
are equivalent to the equations
% \begin{subequations} \label{eqn:sptcfoureqns}
% \begin{equation}\label{eqn:sptconeunits}
% \oneunit{x}^{g(x)} \equiv \oneunit{y} \pmod{p^e}, \quad \oneunit{y}^{g(y)}
% \equiv \oneunit{x} \pmod{p^e},
% \end{equation}
% \begin{equation}\label{eqn:sptcomegas}
% \omega(x)^{g(x_0)}= \omega(y), \quad
% \text{and} \quad \omega(y)^{g(y_0)} = \omega(x).
% \end{equation}
% \end{subequations}
\begin{subequations} \label{eqn:sprwfoureqns}
\begin{equation}\label{eqn:sprwoneunits}
\oneunit{x_1}^{g(x_1)} \equiv \oneunit{x_2} \pmod{p^e}, \quad
\text{\ldots}, \quad  \oneunit{x_k}^{g(x_k)} \equiv \oneunit{x_1} \pmod{p^e},
\end{equation}
\begin{equation}\label{eqn:sprwomegas}
\omega(x_1)^{g(x_{01})} =\omega(x_2), \quad
\text{\ldots}, \quad \omega(x_k)^{g(x_{0k})} = \omega(x_1).
\end{equation}
\end{subequations}

%The following theorem is exactly parallel to Theorem~\ref{x0y0tc}.

\begin{theorem} \label{x0y0rw} Let $p$ be a prime, $p \neq 2$, and
  $g(x)$ be a polynomial. Then for every $x_{01}, \ldots, x_{0k} \in \Z/(p-1)\Z$, there
  are $\gcd(p-1,g(x_{01})\cdots g(x_{0k})-1)$ solutions $(x_1, \ldots, x_k)$ to the congruences
  \begin{equation} \label{eqn:sprwp}
\omega(x_1)^{g(x_{01})} \oneunit{x_1}^{g(x_1)} \equiv x_2 \pmod{p}, \quad
\text{\ldots}, \quad \omega(x_k)^{g(x_{0k})} \oneunit{x_k}^{g(x_k)} \equiv x_1 \pmod{p}
\end{equation}
where $x_1, \ldots, x_k \in (\Z / p \Z)^\times$.  Alternatively, for any given
$x_k \in (\Z / p \Z)^\times$, there are
$$N_{G-1}(\ord_p{x_k}) \left(\frac{p-1}{\ord_p{x_k}}\right)^k$$ tuples
$(x_{01}, \ldots,x_{0k}) \in (\Z/(p-1)\Z)^k$ such that there exist $x_1, \ldots, x_{k-1} \in  (\Z /
p \Z)^\times$ which solve~\eqref{eqn:sprwp},
where $N_{G-1}(d)$ is the number of solutions to $G(z_1,\ldots, z_k)-1=
g(z_1)\cdots g(z_k)-1 \equiv 0$ modulo $d$. (Note that such $x_1, \ldots, x_{k-1}$ are unique.)
\end{theorem}

\begin{remark}  For $p=2$ a similar theorem can be proved, but this is not necessary for solving~\eqref{rwmodp}.
\end{remark}

\begin{proof} The given congruences are equivalent
  to~\eqref{eqn:sprwfoureqns} with $e=1$, which reduces to just
\begin{equation} \label{rwmodp}
\omega(x_k)^{g(x_{01})\cdots g(x_{0k})-1} =1.
\end{equation}
For fixed $(x_{0i})$, \eqref{rwmodp} is satisfied for exactly the
$x_k \in (\Z / p \Z)^\times$ for which $\mbox{ord}_p(x_k)$, divides
$g(x_{01})\cdots g(x_{0k})-1$.  There will be $\gcd(p-1,g(x_{01})\cdots g(x_{0k})-1)$ such
values for $x_k$ in the cyclic group $(\Z / p \Z)^\times$, and for each
$x_k$ there will be exactly one tuple $(x_1, \ldots, x_{k-1})$ in $((\Z / p \Z)^\times)^{k-1}$ satisfying \eqref{eqn:sprwp}.
%$\omega(y)^{g(y_0)} \oneunit{y}^{g(y)} \equiv x \pmod{p}$.

On the other hand, if $x_k$ is fixed, then $\ord_p(x_k)$ divides
$g(x_{01})\cdots g(x_{0k})-1$ if and only if $g(x_{01})\cdots g(x_{0k})-1 \equiv 0
\pmod{\ord_p(x_k)}$.  There 
are $N_{G-1}(\ord_p{x_k})$ such tuples $(x_{01}, \ldots ,x_{0k})$ in $(\Z/(\ord_p x_k)\Z)^k$ and
$N_{G-1}(\ord_p{x_k}) (({p-1})/{\ord_p{x_k}})^k$ such tuples in
$(\Z/(p-1)\Z)^k$.  Once again, for each $x_k$, and $x_{01}$, \ldots, $x_{0k}$, the
equations prescribe a unique tuple $(x_1, \ldots, x_{k-1})$.
\end{proof}

\begin{corollary} \label{tcsolnsmodp} Let $p$ be a prime.  Then there are  
$$\sum_{x_{01}=1}^{p-1} \cdots \sum_{x_{0k}=1}^{p-1} \gcd(p-1,g(x_{01})\cdots g(x_{0k})-1) = 
\sum_{d | p-1} \phi(d) ((p-1)/d)^k N_{G-1}(d) $$ solutions $(x_1, \ldots, x_k)$ to the congruences
\begin{equation*}
{x_1}^{g(x_1)} \equiv {x_2} \pmod{p}, \quad
\text{\ldots}, \quad  {x_k}^{g(x_k)} \equiv {x_1} \pmod{p},
\end{equation*}
%$$x^{g(x)}  \equiv y \pmod p \quad \text{and} \quad 
%y^{g(y)}  \equiv x \pmod p$$
where $1 \le x_i \le p(p-1)$ and $p \nmid x_i$ for all $i=1,, \ldots, k$.
\end{corollary}
\begin{proof} If $p=2$ then this is just the statement that there is one solution modulo $2$.  Otherwise, the proof follows exactly the proof of Corollary~\ref{solnsmodp}.
\end{proof}

%\begin{proof}
% Theorem~\ref{x0y0tc} implies that for each choice of $x_0 \in \Z/(p-1)\Z$ there are $\gcd(p-1, g(x_0)-1)$ elements
% $x_1 \in (\Z/p\Z)^\times$ 
% with the property that
% $$\omega(x_1)^{g(x_0)} \oneunit{x_1}^{g(x_1)} \equiv x_1 \pmod{p}. $$
% By the Chinese Remainder Theorem, there will be exactly one  $x \in \Z/p(p-1)\Z$ such that
% $x \equiv x_0 \pmod {p-1}$ and $x \equiv x_1 \pmod{p}$. 
% By the interpolation we set up in the introduction, since $x \equiv x_0 \pmod{p-1}$, we know that
% for each such $x$:
% $$x^{g(x)} =\omega(x)^{g(x_0)} \oneunit{x}^{g(x)} \equiv \omega(x_1)^{g(x_0)} \oneunit{x_1}^{g(x_1)} \equiv x_1 \equiv x \pmod {p}. $$
% Finally, since exactly $\gcd(p-1, g(x_0)-1)$ such $x$ exist for each $x_0$, we have $\sum_{x_0=1}^{p-1} \gcd(p-1,g(x_0)-1)$ solutions to the congruence.  

% Alternatively, for each choice of $x_1 \in (\Z/p\Z)^\times$ of multiplicative order $d$ modulo $p$, there are $((p-1)/d) N_{g-1}(d)$ values of $x_0 \in \Z/(p-1)\Z$ satisfying the congruence, and $\phi(d)$ choices of $x_1$ with multiplicative order $d$ for each $d \mid (p-1)$.  (The equality of the two sums also follows from \cite[Theorem 1]{toth}).
% \end{proof}

Next we consider solutions modulo $p^e$ and $p$-adic solutions.

%\todo[noline, author=Josh]{See all comments associated with
 % Definitions~\ref{Gae} and~\ref{Gainf}.}
% \begin{definition}
% Given $a$ and $b$ in $\Zp$. Let $T_{a,b,e}$ equal the set of solutions $(x,y)$ to the equations
% \begin{equation}\label{eqn:tcmodpe}
% x^{g(x)}  \equiv y \pmod {p^e} \quad \text{and} \quad y^{g(y)} \equiv
% x \pmod{p^e}
% \end{equation}
% where $1 \le x, y \le p^e(p-1)$ such that $p \nmid x, p \nmid y$ and
% $x \equiv a \pmod q$, $y \equiv b \pmod q$.
% \end{definition}

\begin{definition} \label{Wabe}
Given $a_1, \ldots, a_k$ in $\Zp$, let $W^k_{a,e}$ equal the set of rooted closed walks of length $k$ modulo $p^e$ associated with the map $x \mapsto x^{g(x)}$
where $1 \le x_i \le p^e(p-1)$, $p \nmid x_i$, and
$x_i \equiv a_i \pmod q$ for all $i=1, \ldots, k$.
\end{definition}

% \begin{definition} \label{Tabinf}
% Given $a$ and $b$ in $\Zp$, let $T_{a,b, \infty}$
%  equal the set of solutions $(x,y, x_0, y_0) \in \Z_p^2 \times \set{1,
%   \ldots, p-1}^2$ to the equations
% $$\omega(x)^{g(x_0)} \oneunit{x}^{g(x)}= y \quad
% \text{and} \quad \omega(y)^{g(y_0)} \oneunit{y}^{g(y)} = x$$
% such that $p \nmid x$, $p \nmid y$, $x \equiv a \pmod q$, and $y \equiv b
% \pmod q$.
% \end{definition}

\begin{definition} \label{Wabinf}
Given $a_1, \ldots, a_k$ in $\Zp$. Let $W^k_{a, \infty}$
 equal the set of solutions $(x_1, \ldots, x_k, x_{01}, \ldots, x_{0k}) \in \Z_p^k \times \set{1,
   \ldots, p-1}^k$ to the equations
\begin{equation}
\label{eqn:rwZp}
\omega(x_1)^{g(x_{01})} \oneunit{x_1}^{g(x_1)}= x_2, \quad
\ldots, \quad \omega(x_k)^{g(x_{0k})} \oneunit{x_k}^{g(x_k)} = x_1
\end{equation}
such that $p \nmid x_i$, and $x_i \equiv a_i \pmod q$ for all  $i=1, \ldots, k$.
\end{definition}

We start by identifying and counting the nonsingular solutions.
 Let 
  $h_{1}, \ldots, h_{k}: \Z_p^k \rightarrow \Z_p$ be the functions
%\begin{subequations}  \label{eqn:sptclogfns}
\begin{eqnarray*}
 h_{1}(x_1, \ldots, x_k) &= &g(x_1)\log(\oneunit{x_1})-\log(\oneunit{x_2}), \\
 & \vdots &\\
 h_{k}(x_1, \ldots, x_k) &= &g(x_k)\log(\oneunit{x_k})-\log(\oneunit{x_1}).
    \end{eqnarray*}
 %   \end{subequations}
Note that when \eqref{eqn:sprwomegas} is satisfied, \eqref{eqn:sprwoneunits} is equivalent to
\begin{equation}  \label{eqn:sprwlogeqns}
h_{1}(x_1, \ldots, x_k) \equiv  \cdots \equiv h_k(x_1, \ldots, x_k) \equiv 0 \pmod{p^e},
\end{equation}
 since $z \mapsto \log (z + 1)$
induces a bijection from $p(\Z/p^e\Z)$ to itself, fixing $0$.

We let $J$ denote the Jacobian matrix
% $$\begin{pmatrix}
% \dfrac{\partial h_1}{\partial x} & \dfrac{\partial h_1}{\partial y} \\ \\
% \dfrac{\partial h_2}{\partial x} & \dfrac{\partial h_2}{\partial y}
% \end{pmatrix}
% = 
% \begin{pmatrix}
% \dfrac{g(x)}{x}+g'(x) \log \oneunit{x} & -\dfrac{1}{y} \\ \\
% -\dfrac{1}{x} & \dfrac{g(y)}{y} + g'(y) \log \oneunit{y} \\
% \end{pmatrix}
% $$
$$\begin{pmatrix}
\dfrac{\partial h_1}{\partial x_1} & \cdots & \dfrac{\partial h_1}{\partial x_k} \\ 
\vdots & \ddots & \vdots \\
\dfrac{\partial h_k}{\partial x_1} & \cdots & \dfrac{\partial h_k}{\partial x_k}
\end{pmatrix}$$
$$= 
\begin{psmallmatrix}
\frac{g(x_1)}{x_1}+g'(x_1) \log \oneunit{x_1} & -\frac{1}{x_2} & 0 & \cdots& \cdots & 0\\
0 & \frac{g(x_2)}{x_2}+g'(x_2) \log \oneunit{x_2} & -\frac{1}{x_3} & 0 & \cdots & 0\\
\vdots &\ddots& \ddots & \ddots & \ddots& \vdots \\
0 &\cdots& \cdots & 0 &   \frac{g(x_{k-1})}{x_{k-1}}+g'(x_{k-1}) \log \oneunit{x_{k-1}}  &  -\frac{1}{x_k}\\
-\frac{1}{x_1} & 0 & \cdots & \cdots &0& \frac{g(x_k)}{x_k} + g'(x_k) \log \oneunit{x_k} \\
\end{psmallmatrix}
$$
as usual.  
% We also let 
% $$J_i = 
% \begin{pmatrix}
% \dfrac{\partial^i h_1}{\partial x^i} & \dfrac{\partial^i h_1}{\partial y^i} \\ \\
% \dfrac{\partial^i h_2}{\partial x^i} & \dfrac{\partial^i h_2}{\partial y^i} 
% \end{pmatrix}$$

% \begin{theorem} \label{nonsinglifttc} Let $p$ be a prime, $p \neq 2$,
%   and let $a$ and $b$ be such that $g(a)g(b) \not \equiv 1$ modulo
%   $p$.  Then $\abs{T_{a,b,e}} =\abs{T_{a,b, \infty}}= \abs{T_{a,b,1}}$
%   for all $e\geq 1$.
% \end{theorem}
\begin{theorem} \label{nonsingliftrw} Let $p$ be a prime
  and let $a_1, \ldots, a_k$ be such that $g(a_1) \ldots g(a_k) \not \equiv 1$ modulo
  $p$.  Then $\abs{W^k_{a,e}} =\abs{W^k_{a, \infty}}= \abs{W^k_{a,1}}$
  for all $e\geq 1$.
\end{theorem}

\begin{remark}
Note that if $p=2$ there is only one rooted closed walk modulo $p$.  Whether or not it is singular depends on the value of $g(1)$ modulo $2$.
%Note that similarly to  Theorem~\ref{p-is-2}, if $p=2$ and $p \nmid a_i$, then $g(a_1) \ldots g(a_k) \not \equiv 1$ modulo
%  $p$ is not possible, and all solutions are singular.
\end{remark}

\begin{proof}
Suppose we have $z_1, \ldots, z_k \in (\Z/p\Z)^\times$ such that
$g(z_1) \ldots g(z_k) \not \equiv 1 \pmod{p}$.
Note  that 
$\log (1+p\Z_p) \subseteq  p\Z_p$, so
$$\det J(z_1, \ldots, z_k) \equiv \dfrac{g(z_1) \cdots g(z_k)-1}{z_1 \cdots z_k} \not\equiv 0 \pmod{p}.$$

By Proposition~\ref{hensel-system}, for fixed
$(x_{01}, \ldots ,x_{0k}) \in (\Z/(p-1)\Z)^k$, each solution
$(z_1, \ldots, z_k) \in ((\Z/p\Z)^\times)^k$ with
$g(z_1) \cdots g(z_k) \not \equiv 1 \pmod p$ to equations~\eqref{eqn:sprwlogeqns}
%$h_1(x,y) \equiv h_2(x,y) \equiv 0 \pmod{p}$
will lift to a unique solution 
 in $(\Zp)^k$. Thus this % unique solution in $\Zp$ 
will correspond to one solution to equations
(\ref{eqn:sprwinterp}), or equivalently~(\ref{eqn:rwmodpe}), for each $e$. 
Applying the Chinese Remainder Theorem as before gives our result.
\end{proof}

We can count the nonsingular rooted closed walks in a way exactly parallel to Theorem~\ref{nonsinglift}.

\begin{theorem} \label{rwnonsinglift} Let $p$ be a prime.  Then there are  
$$\sum_{x_{01}=1}^{p-1} \cdots \sum_{x_{0k}=1}^{p-1} \gcd(p-1,g(x_{01})\cdots g(x_{0k})-1)
 - \left\{\sum_{g(z_{1})\cdots g(z_{k})\equiv 1 \pmod{p}} N_{G-1}(\ord_p(z_k)) \left( \frac{p-1}{\ord_p(z_k)} \right)^k\right\}$$
$$= \sum_{d \mid p-1} \abs{\set{(z_1, \ldots, z_k) \in ((\Z/p\Z)^\times)^k \mid g(z_1) \cdots g(z_k) \not \equiv 1 \pmod{p},\ \ord_p(z_k) = d}}\left( \frac{p-1}{d}\right)^k N_{G-1}(d)
$$
rooted closed walks of length $k$ modulo $p^e$ associated with the map $x \mapsto x^{g(x)}$
where $1 \le x_i \le p^e(p-1)$ and $p \nmid x_i$ for all $i=1, \ldots, k$, and $g(x_1) \cdots g(x_k) \not
\equiv 1 \pmod p$.

These are in one-to-one correspondence with the solutions  $(x_1, \ldots, x_k,
x_{01}, \ldots, x_{0k}) \in \Zp^k \times 
\set{1, \ldots, p-1}^k$ to~\eqref{eqn:rwZp}
such that $p \nmid x_i$ for all $i=1, \ldots, k$ and $g(x_1) \cdots g(x_k) \not
\equiv 1 \pmod p$.
\end{theorem}

\section{Two-cycles} \label{sec:tc}

We now specialize to the case of $k=2$ and $g(z)=z^n$ in order to count the singular rooted closed walks of length 2, which we will refer to as two-cycles.  We establish a lifting condition using the left kernel of the Jacobian matrix.  This technique appears not to be found in previous literature, although the multivariable Taylor expansion we use is found in Proposition~7.2 of~\cite{lang}.

We will let $x_1=x$, $x_2=y$, $a_1=a$, $a_2=b$, etc.\ in this section, and also use $T_{a,b, \bullet}$ for $W^2_{a,\bullet}$.
In addition to the nonsingular case where $x^n y^n\not\equiv 1$
modulo $p$, there are two singular cases: where $y^n \equiv x^{-n} \not \equiv -1$ modulo $p$, and
where $y^n \equiv x^n \equiv -1$ modulo $p$. 
\begin{theorem} \label{singlifttc} Let $p$ be a prime, $p \neq 2$, 
%and  $p \nmid n$, 
  and let $a, b \in \Zp$ be roots of unity such that
  $b^n = a^{-n}$.  Then
$$\abs{T_{a,b,e}} =
% \begin{cases}
% p^{\lfloor e/2  \rfloor} \abs{T_{a,b,1}} & \text{if~} a^n \neq -1\\
% p^{\lfloor e/3 \rfloor+\lfloor (e+1)/3\rfloor}  \abs{T_{a,b,1}} &
% \text{if~} a^n = -1\\ 
% \end{cases}$$
\begin{cases}
p^{e-1} & \text{if~}  e \leq v_p(n) \text{~and~}  b^n \neq -1\\ 
p^{\lfloor (e+v_p(n))/2  \rfloor} &  \text{if~} e \geq
v_p(n)+1 \text{~and~} b^n \neq -1\\
p^{e-1} & \text{if~}  e \leq 2 v_p(n) \text{~and~}  b^n = -1\\ 
p^{\lfloor (e+v_p(n))/3 \rfloor+\lfloor (e+v_p(n)+1)/3\rfloor}   &
\ \text{if~} e \geq 2v_p(n)+1 \text{~and~}  b^n = -1\\ 
\end{cases}$$
for all $e\geq 1$ and $\abs{T_{a,b,\infty}} =  \abs{T_{a,b,1}}$.
\end{theorem}

\begin{remark}
Note that the powers of $p$ in the first two formulas are the same as
in Theorem~\ref{singlift}, and the the second two formulas are equal
  if $e=2v_p(n)+1$, $e=2v_p(n)+2$, or  $e=2v_p(n)+3$.
\end{remark}

\begin{proof} 
Fix $x_0, y_0 \in \Z/(p-1)\Z$, and consider $x \equiv a$ and $y \equiv b \pmod{p}$.  Assume $(a,b)$ is a solution to~\eqref{eqn:sprwlogeqns} modulo $p$, since otherwise all of the sets are empty.  Also note that if roots of unity $(a,b)$ form a solution modulo $p$, they also form a solution in $\Z_p$. Suppose $(x,y)$ is a solution to~\eqref{eqn:sprwlogeqns} modulo $p^e$.
Each lifted solution then looks like $(x+tp^e, y+up^e)$ for $0 \leq t, u < p$.  Modulo $p^{e+1}$, 
$$\begin{pmatrix} h_1(x+tp^e,y+up^e)\\ h_2(x+tp^e,y+up^e)
\end{pmatrix} \equiv \begin{pmatrix} h_1(x,y)\\ h_2(x,y)
\end{pmatrix} + J \begin{pmatrix} t \\u \end{pmatrix} p^e \pmod{p^{e+1}}
$$
by multivariable Taylor series expansion.  This has a solution $(t,u)$ if and only if $-(h_1(x,y), h_2(x,y))/p^e$ is in the range of $J$ modulo $p$.  Since $(x,y)$ is a singular solution and $J$ is not zero, $J$ must have corank 1.  Then a vector is in the range of $J$ if and only if it is perpendicular to any nonzero vector which spans the left kernel of $J$ modulo $p$, such as $\mathbf{v} = \begin{pmatrix} 1 & a^n \end{pmatrix}$.  Furthermore, if there is a solution $(t,u)$ then there must be exactly $p$ of them.

Using the facts that $x = a\oneunit{x}$ and $y = b\oneunit{y}$ and $b^n=a^{-n}$, our lifting condition is equivalent to the equation
\begin{eqnarray} \label{eqn:sptclifting}
0&\equiv& 
%(x^n-a^n) \log \oneunit{x} + (a^ny^n-1) \log \oneunit{y} %\pmod{p^{e+1}}\\
%\notag
a^n(\oneunit{x}^n-1) \log \oneunit{x} + (\oneunit{y}^n-1)\log \oneunit{y} \pmod{p^{e+1}}
\end{eqnarray}
Since we are assuming $\oneunit{y} \equiv \oneunit{x}^{x^n} \pmod{p^e}$, \eqref{eqn:sptclifting} is then equivalent to
\begin{eqnarray} 
\label{eqn:sptcliftingprime}
0&\equiv& 
%a^n(\oneunit{x}^n-1) \log \oneunit{x} + %(\oneunit{x}^{nx^n}-1)a^n \oneunit{x}^n\log \oneunit{x} %\pmod{p^{e+1}}\\
a^n\log \oneunit{x} (\oneunit{x}^{nx^n+n}-1) \pmod{p^{e+1}}
\end{eqnarray}
Letting $\bar{h}(x)$ be the right side of this, we have the Taylor expansion
\begin{eqnarray}\label{eqn:sptcliftingtaylor}
\bar{h}(x)&  =  &n a^{n-2}(a^n+1)(x-a)^2 \\
&\quad & \quad + a^{n-3}(n^2 a^n + n^2(a^n+1)^2  -2n(a^n+1))(x-a)^3/2 \notag \\
&\quad & \quad +  n^2\ (\text{higher powers of $(x-a)$}) \notag \\
&\quad &\quad+  n(a^n+1)\ (\text{higher powers of $(x-a)$}) %\pmod{p^{e+1}}. 
\notag
\end{eqnarray}

If $a^n \neq -1$, then $v_p(\bar{h}(x)) = 2v_p(y-b)+v_p(n)$, and we proceed as
in Theorem~\ref{singlift}. Note that the number of solutions of~\eqref{eqn:sprwlogeqns} is the same as the number of
solutions of~\eqref{eqn:sprwoneunits} and that each solution
to~(\ref{eqn:sprwfoureqns}) again gives us a unique solution
to~(\ref{eqn:rwmodpe}) as in Theorem~\ref{nonsingliftrw}.

If $a^n = -1$, then~\eqref{eqn:sptcliftingtaylor} becomes
\begin{eqnarray*}
\bar{h}(x)& = & n^2 a^{-3}(x-a)^3 +  n^2\ (\text{higher powers of $(x-a)$}),
\end{eqnarray*}
and  $v_p(\bar{h}(x)) = 
%3v_p(x-a) + 2 v_p(n)$.  An induction similar to
%Theorem~\ref{singlift} gives us the
%result stated in the theorem. 
3v_p(y-b)+2\ell$,
where $\ell = v_p(n)$ as before.
Suppose $1 \leq e \leq 3 + 2\ell$.  If $h_1(a, b) \equiv h_2(a,b) \equiv 0$ modulo
$p$, then for any $x \equiv a \pmod{p}$, $\bar{h}(x) \equiv 0 \pmod{p^e}$ and $(a,b)$ lifts to 
$p^{e-1}$ solutions
modulo $p^e$.  We then induct for
$e \geq 2\ell+1$ as in Theorem~\ref{singlift}, giving us
$p^{\floor{(e-\ell)/3}+\floor{(e-\ell+1)/3}} p^{2\ell} =
p^{\floor{(e+\ell)/3}+\floor{(e+\ell+1)/3}}$
solutions modulo $p^e$ and one solution in $\Zp$ for each solution
modulo $p$.  Applying the Chinese Remainder Theorem then gives us the
result.
\end{proof}

\begin{theorem} \label{tcsolnpnot2} Let $p$ be a prime, $p \neq 2$ and
  $p \nmid n$.  Then there are 
\begin{eqnarray*}
  &&\left.  \sum_{x_0=1}^{p-1}\sum_{y_0=1}^{p-1} \gcd(p-1,x_0^n y_0^n-1)
     \right. \\
  &&\quad + \quad 
     \left.
     \sum_{x_0=1}^{p-1}\sum_{y_0=1}^{p-1} \gcd(p-1, n(y_0^n+1),  x_0^n y_0^n-1) 
     \cdot \left(p^{\lfloor e/2
     \rfloor}-1\right) \right.\\
  &&\quad + \quad 
     \left.
     \sum_{x_0=1}^{p-1}\sum_{y_0=1}^{p-1} \left( \gcd(p-1, 2n,  x_0^n
     y_0^n-1) 
     - \gcd(p-1, n,  x_0^n
     y_0^n-1) \right) \cdot \left(p^{\lfloor e/3 \rfloor+\lfloor (e+1)/3\rfloor} 
     - p^{\lfloor e/2
     \rfloor}\right) \right.\\
  &&= \left. \sum_{d \mid p-1}
     \phi(d)^2\left(\frac{p-1}{d}\right)^2 N_{z^n-1}(d) \right.\\
  && \quad + \quad 
     \left. \sum_{d \mid p-1} \phi(d)\left(\frac{p-1}{d}\right)^2
     \mathcal{N}_{n(z^n+1)}(d)N_{z^n-1}(d) \cdot
     \left(p^{\lfloor e/2 \rfloor}-1\right) \right.\\
  && \quad + \left. \sum_{\substack{d \mid \gcd(p-1,2n) \\ d \nmid \gcd(p-1,n)}}
     \phi(d)^2\left(\frac{p-1}{d}\right)^2 N_{z^n-1}(d)
% - \sum_{d \mid \gcd(p-1,n)}\phi(d)^2\left(\frac{p-1}{d}\right)^2 N_{z^n-1}(d) 
\right. \cdot
     \left(p^{\lfloor e/3 \rfloor+\lfloor (e+1)/3\rfloor}  
     - p^{\lfloor e/2  \rfloor}\right)
\end{eqnarray*}
solutions $(x,y)$ to the congruences 
\begin{equation*}
x^{x^n}  \equiv y \pmod {p^e}  \quad \text{and} \quad y^{y^n}  \equiv
x \pmod {p^e} 
\end{equation*}
where $1 \le x,y \le p^e(p-1)$ such that $p \nmid x$, $p \nmid y$, 
 ${N}_{z^n-1}(d)$ is the number of solutions to $z^n-1
 \equiv 0$ modulo $d$, and 
 $\mathcal{N}_{n(z^n+1)}(d)$ is the number of solutions to $n(z^n+1)
 \equiv 0$ modulo $d$ such that $z$ is relatively prime to $d$.
 However, there
are only  
$$\sum_{x_0=1}^{p-1}\sum_{y_0=1}^{p-1} \gcd(p-1,x_0^ny_0^n-1) =  \sum_{d \mid p-1}
  \phi(d)^2\left(\frac{p-1}{d}\right)^2N_{z^n-1}(d)$$
  solutions  $(x,y, x_0, y_0) \in \Zp^2 \times 
\set{1, \ldots, p-1}^2$ to the equations
$$\omega(x)^{x_0^n} \oneunit{x}^{x^n}= y \quad
\text{and} \quad \omega(y)^{y_0^n} \oneunit{y}^{y^n} = x$$
such that $p \nmid x$, $p \nmid y$.

\end{theorem}

\begin{remark}
A form of Theorem~\ref{tcsolnpnot2} for $p\mid n$  follows along the lines of
Theorem~\ref{solnpdivn}.  The exact statement is omitted.
\end{remark}

\begin{proof} The total number of solutions modulo $p$ is given by
  Corollary~\ref{tcsolnsmodp}.  A solution $(x,y)$ is in $T_{a,b,1}$
  as in Theorem~\ref{singlifttc} if and only if
  $\omega(y)^{x_0^n y_0^n-1}=1$ and
  $\omega(x)^n\omega(y)^n = \omega(y)^{n(y_0^n+1)}=1$.  (Note that
  $\omega(x)$ and $\omega(y)$ are roots of unity congruent modulo $p$
  to $x$ and $y$, respectively.)  This is equivalent to
  $\omega(y)^{\gcd(n(y_0^n+1),\  x_0^n y_0^n-1)}=1$, and for a fixed
  $x_0$, $y_0$ there are $\gcd(p-1, n(y_0^n+1), x_0^n y_0^n-1)$ such
$y$, each corresponding to a unique $x$ modulo $p$.  Alternatively,
given a $y \in (\Z/p\Z)^\times$ of order $d$, there are 
$$\left(\frac{p-1}{d}\right)^2 \abs{\set{ (x_0, y_0)\in (\set{1, 2,
      \ldots, d})^2 \mid n(y_0^n+1) \equiv 0 \pmod{d}, x_0^n y_0^n-1 \equiv 0
  \pmod{d}}}$$
 pairs $(x_0, y_0) \in (\Z/(p-1)\Z)^2$ satisfying the conditions.
There are $\mathcal{N}_{n(z^n+1)}(d)$ values of $y_0$ in the given set, and
for each one there are $N_{z^n-1}(d)$ values of $x_0$.

Furthermore, a
solution $(x,y)$ is in $T_{a,b,1}$ as above with $b^n = -1$ if and
only if
  $\omega(y)^{x_0^n y_0^n-1}=1$, 
  $\omega(x)^n = \omega(y)^{n(y_0^n)}=-1$, and
  $\omega(y)^n =-1$.  The third condition is equivalent to
  $\omega(y)^{2n}=1$  but $\omega(y)^n \neq 1$, and implies that the
  order of $y$ must be even.  Then the first condition implies that
  $x_0^ny_0^n-1$ must be even, so $x_0$ and $y_0$ must be odd, which
  combined with the third condition makes the second condition
  redundant.  So we have $\omega(y)^{x_0^n y_0^n-1}=1$,
  $\omega(y)^{2n}=1$, and  $\omega(y)^n \neq 1$, which is satisfied
  for $\gcd(p-1, 2n, x_0^n y_0^n-1) - \gcd(p-1, n, x_0^n y_0^n-1)$
  values of $y$ for each fixed pair $(x_0, y_0)$.  Alternatively, the
  conditions imply that for 
for
  each $y \in (\Z/p\Z)^\times$ of order $d$, $d$ must divide $2n$ but
  not $n$, and if so there are 
$$\left(\frac{p-1}{d}\right)^2 \abs{\set{ (x_0, y_0)\in (\set{1, 2,
      \ldots, d})^2 \mid x_0^n y_0^n-1 \equiv 0
  \pmod{d}}}$$
 pairs $(x_0, y_0) \in (\Z/(p-1)\Z)^2$ satisfying the conditions.
There are $\phi(d)$ values of $y_0$ in the given set, and
for each one there are $N_{z^n-1}(d)$ values of $x_0$.
\end{proof}

When $p=2$, we see that, as in the fixed point case, our equation is
singular modulo $p$ for all odd values of $x$.  However, this time the
lifting only takes two different forms, rather than three forms as in
Theorem~\ref{p-is-2}.

\begin{theorem} \label{p-is-2tc} Let $p=2$. Then in all cases when $a  \not =b$ there are no two-cycles. However,
when $n$ is even and $a=\pm 1$ or when $n$ is odd and $a=1$, we have that 
$$\abs{T_{a,a,e}} = 
\begin{cases}
p^{e-2} & \text{if~}  2 \leq e \leq v_p(n)+4\\ 
p^{\lfloor (e+v_p(n)+1)/2  \rfloor} &  \text{if~} e \geq
v_p(n)+5 
\end{cases}$$
for all $e\geq 2$. 
\noindent
And when $n$ is odd and $a=-1$, we have that
$$\abs{T_{a,a,e}} = 
\begin{cases}
p^{e-2} & \text{if~}  2 \leq e \leq 4\\ 
p^{\lfloor e/3 \rfloor + \lfloor (e+1)/3  \rfloor} &  \text{if~} e \geq 5 
\end{cases}$$
for all $e\geq 2$. % \todo[author=Josh]{If $a\neq b$ then no solutions?}
In all cases, $|T_{a, a, \infty}| = |\set{(a,a)}|=1$.
\end{theorem}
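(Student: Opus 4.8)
The plan is to specialize the machinery of Theorem~\ref{singlifttc} to $p=2$, combining it with the ``all small $e$ lift'' bookkeeping of Theorem~\ref{p-is-2}. First I would reduce the two-cycle system to a single congruence on $1+4\Z_2$. The decomposition $(\Z/2^e\Z)^\times \cong \set{\pm 1}\times(1+4\Z/2^e\Z)$ splits each of $x^{x^n}\equiv y$ and $y^{y^n}\equiv x$ into a sign part and a $(1+4\Z_2)$-part. Since $x$ and $y$ are odd, $x^n$ and $y^n$ are odd, so $\omega(x)^{x^n}=\omega(x)$ and $\omega(y)^{y^n}=\omega(y)$; the sign parts therefore read $\omega(x)=\omega(y)$, forcing $a=b$, which is exactly why only $T_{b,b,e}$ occurs. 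Setting $\xi=b=\pm1$ and $x(y)=\xi\,\oneunit{y}^{y^n}$ (the value of $x$ forced by $y$, with $\omega(y)=\xi$ and trivial Teichm\"uller exponent $y_0^n\equiv 1$, so $x\equiv\xi\pmod 4$ automatically), each admissible $y$ determines a unique $x$, and by the reduction of Theorem~\ref{singlifttc} the solutions correspond to the roots of $h_{y_0}(y)=(\barh(y)-1)\log\oneunit{y}\equiv 0\pmod{2^e}$ among odd $y\equiv\xi\pmod 4$ with $1\le y\le 2^e$, where $\barh(y)=x(y)^n y^n$.

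I would then read off the relevant valuations from the factorization $h_{y_0}(y)=(\barh(y)-1)\log\oneunit{y}$, treating the two factors separately. Since $\oneunit{\xi}=1$, the $p=2$ logarithm gives $v_2(\log\oneunit{y})=v_2(\oneunit{y}-1)=v_2(y-\xi)$; write $m=v_2(y-\xi)$, so $m\ge 2$ for every admissible $y$. For $v_2(\barh(y)-1)$ I would use $\barh(\xi)=1$ and $\barh'(\xi)=n(\xi^n+1)\xi^{-1}$ together with the fact (as in Theorem~\ref{pdivnlifttc}) that each $\barh^{(i)}$ has the form $n^2\bar U + n(y^n+1)\bar V$, which forces the linear term of $\barh-1$ to dominate. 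The decisive quantity is $\xi^n+1$. When $\xi^n=1$ (that is, $\xi=1$ for any $n$, or $\xi=-1$ with $n$ even) the new feature at $p=2$ is that $\xi^n+1=2$ is \emph{not} a unit, so $v_2(\barh'(\xi))=v_2(2n)=1+v_2(n)$ and hence $v_2(\barh(y)-1)=1+v_2(n)+m$. Writing $\ell=v_2(n)$ this yields $v_2(h_{y_0}(y))=2m+\ell+1$, the extra $+1$ over the fixed-point count of Theorem~\ref{p-is-2} coming precisely from $v_2(\xi^n+1)=1$. When $\xi^n=-1$ (that is, $\xi=-1$ with $n$ odd, so $\ell=0$) the linear term vanishes, the expansion of $\barh-1$ begins at the quadratic term with unit coefficient $-n^2\xi^{-2}$, and $v_2(h_{y_0}(y))=2m+m=3m$.

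The counting then mirrors Theorem~\ref{p-is-2}, using $m\ge 2$ and the fact that there are $2^{e-2}$ residues $y\equiv\xi\pmod 4$ with $1\le y\le 2^e$. In the case $\xi^n=1$ one has $v_2(h_{y_0}(y))\ge 5+\ell$, so for $2\le e\le \ell+4$ all $2^{e-2}$ residues solve the congruence; the binary lifting induction of Theorem~\ref{singlift}, now applied with $v_2(h_{y_0}(y))=2m+\ell+1$, then produces $2^{\floor{(e+\ell+1)/2}}$ solutions for $e\ge \ell+5$, the two expressions coinciding at $e=\ell+4$ and $e=\ell+5$. In the case $\xi^n=-1$ one has $v_2(h_{y_0}(y))=3m\ge 6$, so all $2^{e-2}$ residues solve for $2\le e\le 4$, and a ternary lifting induction (splitting $e$ modulo $3$, exactly as in the $b^n=-1$ branch of Theorem~\ref{singlifttc}) yields $2^{\floor{e/3}+\floor{(e+1)/3}}$ solutions for $e\ge 5$. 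Since each $y$ carries a unique $x$ and $p-1=1$, no Chinese Remainder or $(x_0,y_0)$ multiplicity reappears, and these counts are exactly the claimed values of $\abs{T_{b,b,e}}$.

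The step I expect to be the genuine obstacle is the $p=2$ valuation estimate of the second paragraph: confirming that the linear (resp.\ quadratic) term of $\barh-1$ really dictates $v_2(\barh(y)-1)$. Because $v_2(i!)$ grows linearly in $i$, the $1/i!$ denominators in the Taylor coefficients threaten the dominance of the leading term, and one must lean on $m\ge 2$ together with the form $n^2\bar U+n(y^n+1)\bar V$ of the derivatives to verify $(i-1)m>v_2(i!)$ for all $i\ge 2$. It is exactly here that the single fact $v_2(\xi^n+1)=1$ (when $\xi^n=1$) enters, shifting the fixed-point exponent $\floor{(e+\ell)/2}$ to $\floor{(e+\ell+1)/2}$ and the threshold from $\ell+3$ to $\ell+4$; an off-by-one in this estimate would break the match with Theorem~\ref{p-is-2}.
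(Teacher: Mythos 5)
Your proposal is correct and follows essentially the same route as the paper: the paper likewise reduces the two-cycle system to $h_{y_0}(y)=(\barh(y)-1)\log\oneunit{y}\equiv 0 \pmod{2^e}$ with $y_0=1$, splits into the cases $v_2(b^n+1)=1$ and $b^n=-1$ to obtain the valuations $2v_2(y-b)+v_2(n)+1$ and $3v_2(y-b)$, and then counts exactly as you do (all $2^{e-2}$ admissible residues solve for small $e$, followed by the binary/ternary lifting inductions with base cases $e=v_2(n)+5$ and $e=5$). The only difference is bookkeeping: the paper reads the valuation directly off the Taylor expansion of $h_{y_0}$ (with coefficients $2n(b^n+1)$-type data computed in Theorem~\ref{singlifttc}) rather than factoring $v_2(h_{y_0})=v_2(\barh-1)+v_2(\log\oneunit{y})$, and it passes over the factorial-denominator dominance issue you flag with only the assertion that higher terms carry a factor of $n$, so your version is, if anything, slightly more explicit about the one genuinely delicate step.
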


\begin{remark}
  Note that the powers of $p$ in each of the two cases modulo $p^e$
  above are equal if $e=v_p(n)+4$ or $e=v_p(n)+5$.
\end{remark}

\begin{proof}
  The proof is essentially the same as Theorem~\ref{singlifttc} except for an extra factor of 2 in the Taylor expansion when $a = \pm 1$ and $n$ is even or when $a=1$ and $n$ is odd.
\end{proof}

\begin{corollary} Let $p=2$ .
Then the number of solutions $(x,y)$ to the congruences
  \begin{equation*}
x^{x^n}  \equiv y \pmod {p^e}  \quad \text{and} \quad y^{y^n}  \equiv
x \pmod {p^e} 
\end{equation*}
 for $1 \leq y \leq p^e$ where $p \nmid x$, $p \nmid y$ and $p \nmid n$ is 
 $$ 
\begin{cases}
1 &  \text{if~}   e =1  \\ 
2p^{e-2} & \text{if~}  2 \le e \le 4  \\ 
p^{\lfloor (e+1)/2  \rfloor} + p^{\lfloor e/3 \rfloor + \lfloor (e+1)/3  \rfloor} &  \text{if~} e \geq 5 
\end{cases}$$
for all $e\geq 1$.  
And when $p \mid n$
$$
\begin{cases}
1 &  \text{if~}   e =1  \\ 
2p^{e-2} & \text{if~}  2 \le e \leq v_p(n)+4  \\ 

2p^{\lfloor (e+v_p(n)+1)/2  \rfloor} &  \text{if~} e \geq
v_p(n)+5 
\end{cases}
$$
for all $e\ge 1$. For all $n$ the only solutions in $\Zp^2$ to the equations
$$\omega(x) \oneunit{x}^{x^n}= y \quad
\text{and} \quad \omega(y) \oneunit{y}^{y^n} = x$$
such that $p \nmid x$, $p \nmid y$ are $(x,y)=(1,1)$ and $(x,y)=(-1,-1)$. 
\end{corollary}
\begin{remark}
Note that the formulas for solutions modulo $p^e$ above are equal
  if $e=v_p(n)+4$ or if $e=v_p(n)+5$.
\end{remark}

\section{Future Work} \label{future}

Extending the results of Section~\ref{sec:tc} to rooted closed walks of size three and larger does not
seem in principle like it would present any difficulties.  The matrix $J$ will still have corank 1 and equation~\eqref{eqn:sptclifting} extends in a fairly straightforward way. Equation~\eqref{eqn:sptcliftingprime} then becomes
$$0 \equiv a_1^n \log \oneunit{x_1} (\oneunit{x_1}^{n(1+x_1^n+x_1^nx_2^n + \cdots + x_1^n \cdots x_{k-1}^n)}-1) \pmod{p^e}
$$
which seems potentially difficult to deal with in practice.
%However,
%we expect that continuing our technique from Section~\ref{sec:tc} of
%reduction to one $p$-adic variable would result in such unwieldy
%formulas that the solution would not be worthwhile.  Unfortunately,
%the multivariable technique used in~\cite[Section~5]{holden_robinson}
%only appears to apply to the cases that are nonsingular modulo $p$.  A theory of lifting
%for points that are singular modulo $p$ for multiple equations in multiple variables would be %very useful here.

Another significant advance in the case of points
that are singular modulo $p$ would be to extend
more of these results to the generalized self-power map $x \mapsto
x^{g(x)}$ for any polynomial $g(x)$.  Our results can be used to
count solutions modulo $p$ for any polynomial.  We can also determine
which solutions are nonsingular modulo $p$ and thus lift
uniquely.
%\todo[author=Josh]{***In the two-cycle case there is actually a
%  simplification to the nonsingular count which applies when
%  $g(z_1)g(z_2) \equiv g(z_1z_2) \pmod{p-1}$ but I haven't put it in the paper.
%  Should I?  Should I mention it here?***}
On the other hand, we are not able to count the lifts that are singular modulo $p$ without
using a fairly specific form of the polynomial.  Extending to $g(x) =
cx^n$ seems like a reasonable next case to try.

Two other types of congruences modulo $p^e$ involving the self-power
map were studied in~\cite{holden_robinson}, namely 
$x^x \equiv c \pmod{p^e}$ and  $x^x \equiv y^y \pmod{p^e}$.
%\todo[author=Josh]{***citations for $p=2$?  Stick to $p$-adic
%  references here?***}
These could also be
generalized to the expression $x^{g(x)}$ studied here.  In the case of
$x^x$, these expressions are always nonsingular modulo $p$, but for some
polynomials $g(x)$, this will no longer be the case.

This work explores solutions to our equations in the range $\set{1,
  \dots, p^e(p-1)}$.  For cryptographic
%\todo[author=Josh]{***cryptographic and other?***}
applications, we would be most interested in solutions in the range $\set{1,
  \dots, p^e}$.  If $p=2$, these are the same, and we find that we can
both count and (by following the proofs) describe completely our
solutions. 
%\todo[author=Josh]{clarify that we are still talking $p=2$ in next sentence}
 For applications where we wish to take advantage of
pseudorandom properties of functions, this suggests that variations on
the self-power map may not be appropriate when $p=2$.  It is possible that this
predictability might be an advantage for other
applications.
%\todo[author=Josh]{***Can we think of any other applications?***}

For $p > 2$, the standard heuristics suggest that the behavior modulo
$p-1$ of $x \in \set{1, \dots, p^e(p-1)}$ is ``independent'' of the
behavior modulo $p^e$.  Thus, for example, if a fixed point $x \in \set{1,
  \dots, p^e(p-1)}$ comes via 
the Chinese Remainder theorem from a pair $(x_0, x_1) \in \Z/(p-1)\Z
\times \Z/p^e\Z$, we would expect approximately $1/(p-1)$ of such fixed points to
work out so that $x \in \set{1,  \dots, p^e}$.  (See, for example,
\cite{holden_moree} for similar heuristics.)  This suggests that the
numbers of fixed points and two-cycles in $\set{1,  \dots, p^e}$
should have some distribution centered around $1/(p-1)$ times the numbers
calculated in this paper.  Some experimental results on this and related
distributions in the case $e=1$ may be found in
\citelist{\cite{REU2010}*{Section~8}\cite{holden_friedrichsen}\cite{kurlberg_et_al}*{Section~4}\cite{felix_kurlberg}*{Section~1.2}}. We
are not aware of any similar results for $e > 1$.

%\begin{itemize}

%\item $p=2$

%\item $p \mid n$

%\item Extending to larger cycles seems complicated but not difficult.

%\item Extending to general polynomials would be a big advance.

%\begin{itemize}

%\item For counting solutions modulo $p$, this technique works for any
%polynomial $g(x)$.

%\item  Counting how many fixed points modulo
%$p^e$ lift from a singular fixed point modulo $p$?

%\item Counting how many two-cycles lift from a singular two-cycle: we
%seem to need at least that $g(xy) \equiv g(x)g(y)$ modulo $p-1$.
%%% Actually don't need it but could use it to simplify.

%\item Determining which solutions are singular and counting them: we
%really seem to depend on $g(x)$ being a monomial.

%\end{itemize}

%\item Collisions %(Holden-Robinson, one of the new ones?  CG?? But
%                 %that goes back to Crocker --- stick to p-adic references?)

% \item And the big question:  

% \begin{itemize}

% \item Does this shed any light on counting solutions to these equations
% in $\set{1, \ldots, p^e}$?  (Yes if $p=2$!)

% \end{itemize}

%\end{itemize}

\section*{Acknowledgements}
The second author would like to thank the Hutchcroft Fund at
  Mount Holyoke College for support and the Department of Mathematics
  at Mount Holyoke for their hospitality during a visit in the spring
  of 2015.  We would also like to thank Eric Bach for pointing out the similarity of our Taylor expansion to Proposition~7.2 of~\cite{lang}.

\begin{bibdiv}
\begin{biblist}

\bib{anghel}{thesis}{ 
  title={The Self Power Map and its Image Modulo
      a Prime},
    url={https://tspace.library.utoronto.ca/handle/1807/35765},
    type={PhD Thesis}, 
    school={University of Toronto}, 
    author={Anghel, Catalina Voichita}, 
    year={2013},
%    month={Aug}
}

\bib{anghel16}{article}{
  title={The self-power map and collecting all    residue classes}, 
  volume={85}, 
  ISSN={0025-5718, 1088-6842},
  DOI={10.1090/mcom/2978}, 
  number={297},
  journal={Mathematics of Computation}, 
  author={Anghel, Catalina},
  year={2016}, 
  pages={379--399}
}

\bib{balog_et_al}{article}{
	title = {On the number of solutions of exponential congruences},
	author = {Antal Balog},
        author = {Kevin A. Broughan},
        author = {Igor E. Shparlinski},
	volume = {148},
	url = {http://journals.impan.gov.pl/aa/Inf/148-1-7.html},
	doi = {10.4064/aa148-1-7},
	number = {1},
	journal = {Acta Arithmetica},
	year = {2011},
	pages = {93--103}
}

\bib{bourbaki}{book}{
	edition = {1},
	title = {Commutative Algebra: Chapters 1-7},
	shorttitle = {Commutative Algebra},
	publisher = {Addison-Wesley},
	author = {Nicolas Bourbaki},
	year = {1972}
}

% \bib{bourgain_et_al_1}{article}{
% 	title = {Product Sets of Rationals, Multiplicative Translates
%           of Subgroups in Residue Rings, and Fixed Points of the
%           Discrete Logarithm}, 
% 	volume = {2008},
% 	url = {http://imrn.oxfordjournals.org/cgi/content/abstract/2008/rnn090/rnn090},
% 	doi = {10.1093/imrn/rnn090},
% 	number = {rnn090},
% 	journalabbrev = {Int Math Res Notices},
%         journal = {International Mathematics Research Notices},
% 	author = {Jean Bourgain},
%         author = {Sergei V. Konyagin},
%         author = {Igor E. Shparlinski},
% 	month = {aug},
% 	year = {2008},
% 	pages = {29 pages},
% }

% \bib{bourgain_et_al_2}{article}{
% 	title = {Distribution on Elements of Cosets of Small Subgroups
%           and Applications}, 
%   	author = {Jean Bourgain},
%         author = {Sergei V. Konyagin},
%         author = {Igor E. Shparlinski},
% 	year = {2012},
%         volume = {2012},
%         issue = {9}
%         pages = {1968--2009},
%         journal = {International Mathematics Research Notices},
%         journalabbrev =  {Int Math Res Notices}
% }

\bib{CS97}{inproceedings}{
  place={Berlin, Heidelberg},
  title={Efficient group signature schemes for large groups},
  volume={1294}, 
  ISBN={978-3-540-63384-6},
  url={http://www.springerlink.com/content/l31j7157g5102122/},
  author={Camenisch, Jan},
  author = {Stadler, Markus},
    booktitle={Advances in Cryptology --- CRYPTO '97}, 
    book={
        publisher={Springer},
        place={Berlin Heidelberg}, 
        editor={Kaliski, Burton S.}, 
        year={1997}, 
    },
    pages={410--424}
}

% \bib{campbell-thesis}{thesis}{
%       author={Campbell, Mariana},
%        title={On Fixed Points for Discrete Logarithms},
% 	type={Master's Thesis},
% 	date={2003},
% }

\bib{cheon}{article}{
  title={Discrete logarithm problems with auxiliary inputs}, 
  volume={23}, 
  ISSN={0933-2790, 1432-1378},
  DOI={10.1007/s00145-009-9047-0}, 
  number={3},
  journal={Journal of Cryptology}, 
  author={Cheon, Jung Hee},
  year={2009}, 
  pages={457--476}
}

\bib{CG14}{article}{
  title={Congruences involving product of intervals and sets with small
    multiplicative doubling modulo a     prime and applications}, 
    volume={160}, 
    ISSN={0305-0041, 1469-8064},  
    number={3}, 
    journal={Mathematical Proceedings of the Cambridge Philosophical Society}, author={Cilleruelo, J.},
    author ={Garaev, M. Z.}, 
    year={2016}, 
    pages={477--494}
}

\bib{CG15}{article}{
  title={The congruence  $x^x\equiv \lambda \pmod p$},
   author={Cilleruelo, J.},
  author = {Garaev, M.},
    volume={144}, 
    number={6}, 
    journal={Proceedings of the American Mathematical Society}, 
    year={2016}, 
    pages={2411--2418}
}

% \bib{cobeli-zaharescu}{article}{
%       author={Cobeli, Cristian},
%       author={Zaharescu, Alexandru},
%        title={An Exponential Congruence with Solutions in Primitive Roots},
% 	date={1999},
% 	ISSN={0035-3965},
%      journal={Rev. Roumaine Math. Pures Appl.},
%       volume={44},
%       number={1},
%        pages={15\ndash 22},
%       review={\MR{2002d:11005}},
% }

\bib{crocker66}{article}{
	title = {On a new problem in number theory},
	volume = {73},
	number = {4},
	journal = {The American Mathematical Monthly},
	author = {Crocker, Roger},
	year = {1966},
	pages = {355--357},
}

\bib{crocker69}{article}{
	title = {On Residues of $n^n$},
	volume = {76},
	number = {9},
	journal = {The American Mathematical Monthly},
	author = {Crocker, Roger},
	year = {1969},
	pages = {1028--1029},
}

% \bib{drakakis}{article}{
% 	title = {Three Challenges in {C}ostas Arrays},
% 	volume = {89},
% 	issn = {0381-7032},
% 	journal = {Ars Combinatoria},
% 	author = {Konstantinos Drakakis},
% 	year = {2008},
% 	pages = {167--182}
% }

\bib{felix_kurlberg}{article}{
  title={On the fixed points of the map $x \mapsto x^x$ modulo a prime, II},
  author={Felix, Adam Tyler},
  author = {Kurlberg, P\"ar},
 volume={48}, 
  journal={Finite Fields and Their Applications}, year={2017}, 
    pages={141--159}
}

% \bib{field_et_al}{report}{
% 	title = {The {I}gusa Local Zeta Function for $X^m + Y^n$},
% 	institution = {Mount Holyoke College},
% 	author = {Rebecca Field},
%         author = {Vibhavaree Gargeya},
%         author =  {Margaret M. Robinson},
%         author = {Frederic Schoenberg},
%         author = {Ralph Scott},
% 	month = {feb},
% 	year = {1994},
%         url = {http://www.mtholyoke.edu/~robinson/reu/reu92/reu92.htm},
%         eprint = {http://www.mtholyoke.edu/~robinson/reu/reu92/reu92.htm}
% }

\bib{G18}{article}{
title={On distribution of elements of subgroups in arithmetic progressions modulo a prime}, volume={303}, 
number={1}, 
journal={Proceedings of the Steklov Institute of Mathematics}, 
author={Garaev, M. Z.}, 
year={2018}, 
pages={50--57} 
}

\bib{HHJ}{article}{
title={On the congruence $x^x \equiv x$ (mod $n$)}, 
volume={16}, 
journal={Integers}, 
author={Hammer, James},
author ={Harrington, Joshua},
author = {Jones, Lenny}, 
year={2016}, 
number = {A74},
pages={1--17} 
}

\bib{holden_friedrichsen}{article}{ 
  author={Friedrichsen, Matthew},
  author = {Holden, Joshua}, 
  title={Statistics for fixed points of the self-power map}, 
    volume={12}, 
    number={1}, 
    journal={Involve, a Journal of Mathematics},  year={2019}, 
    pages={63--78}
    }

\bib{REU2010}{article}{
	title = {Structure and statistics of the {self-power} map},
	volume = {11},
	number = {2},
	journal = {{Rose-Hulman} Undergraduate Mathematics Journal},
	author = {Friedrichsen, Matthew},
        author = {Larson, Brian},
        author = {McDowell, Emily},
	year = {2010},
}

% \bib{glebsky}{article}{
% 	title = {Cycles in Repeated Exponentiation Modulo $p^n$},
% 	url = {http://arxiv.org/abs/1006.2500},
%         eprint = {arXiv:1006.2500},
% 	author = {Lev Glebsky},
% 	month = {jun},
% 	year = {2010},
% }

% \bib{glebsky_shparlinski}{article}{
% 	title = {Short Cycles in Repeated Exponentiation Modulo a Prime},
% 	url = {http://dx.doi.org/10.1007/s10623-009-9339-2},
% 	doi = {10.1007/s10623-009-9339-2},
% 	journal = {Designs, Codes and Cryptography},
% 	author = {Lev Glebsky},
%         author = {Igor Shparlinski},
% 	month = {oct},
% 	year = {2009},
%         volume = {56},
%         number = {1}
% }

\bib{gouvea}{book}{
	edition = {2},
	title = {p-adic Numbers: An Introduction},
	isbn = {3540629114},
	shorttitle = {p-adic Numbers},
	publisher = {Springer},
	author = {Fernando Quadros Gouvea},
	month = {jul},
	year = {1997}
}

% \bib{guy}{book}{
%       author={Guy, Richard~K.},
%        title={Unsolved Problems in Number Theory},
%    publisher={Springer-Verlag},
% 	date={2004},
%         edition={3}
% }

\bib{holden02}{inproceedings}{
      author={Holden, Joshua},
       title={Fixed points and two-cycles of the discrete logarithm},
	date={2002},
   booktitle={Algorithmic number theory ({A}{N}{T}{S} 2002)},
      editor={Fieker, Claus},
      editor={Kohel, David~R.},
      series={LNCS},
   publisher={Springer},
       pages={405\ndash 415},
  url={http://link.springer-ny.com/link/service/series/0558/bibs/2369/23690405%
.htm},
}

\bib{holden02a}{article}{
      author={Holden, Joshua},
       title={Addenda/corrigenda: Fixed points and two-cycles of the discrete
  logarithm},
	date={2002},
status = {Unpublished}, 
       eprint = {arXiv:math/020802 [math.NT]},
%	note={Unpublished, available at
%          \url{http://xxx.lanl.gov/abs/math.NT/0208028}}, 
}

\bib{holden_moree}{article}{
	title = {Some heuristics and results for small cycles of the
          discrete logarithm}, 
	volume = {75},
	issn = {0025-5718},
	number = {253},
	journal = {Mathematics of Computation},
	author = {Joshua Holden},
        author = {Pieter Moree},
	year = {2006},
	pages = {419--449}
}

\bib{holden_robinson}{article}{
	title = {Counting fixed points, {two-cycles}, and collisions
          of the discrete exponential function using $p$-adic
          methods}, 
%	note = {Special issue dedicated to Alf van der Poorten},
	journal = {Journal of the Australian Mathematical Society},
	author = {Holden, Joshua},
        author = {Robinson, Margaret M.},
        volume = {92},
        number = {2},
        pages = {163--178},
        year = {2012}
}

% \bib{igusa-forms}{book}{
% 	edition = {1},
% 	title = {Lectures on Forms of Higher Degree},
% 	isbn = {3540089446},
% 	publisher = {Springer},
% 	author = {{J.-I.} Igusa},
% 	month = {oct},
% 	year = {1979}
% }

\bib{katok}{book}{
  place={Providence,  RI},
  title={$p$-adic Analysis Compared with Real}, 
  ISBN={978-0-8218-4220-1}, 
  publisher={American Mathematical Society},
  author={Katok, Svetlana}, 
  year={2007}
}

\bib{koblitz}{book}{
	edition = {2},
	title = {$p$-adic Numbers, $p$-adic Analysis, and
          {Zeta-Functions}},
        series = {Graduate Texts in Mathematics},
	isbn = {0387960171},
	publisher = {Springer},
	author = {Neal Koblitz},
	month = {jul},
	year = {1984}
}

\bib{kurlberg_et_al}{article}{
    author = {Kurlberg, P\"ar},
    author = {Luca, Florian},
    author = {Shparlinski, Igor E.},
    title = {On the fixed points of the map $x \mapsto x^x$ modulo a
      prime},
    journal = {Mathematical Research Letters},
    volume =  {22},
    number = {1},
    year = {2015},
    pages = {141--168},
    doi = {10.4310/MRL.2015.v22.n1.a8}
}

\bib{lang}{book}{
place={New York, NY}, 
series={Graduate Texts in Mathematics}, title={Algebra}, publisher={Springer}, author={Lang, Serge}, 
year={2002}, 
pages={465--499}, 
collection={Graduate Texts in Mathematics},
volume={211},
edition = {3}
}

% \bib{levin_et_al}{inproceedings}{
% 	series = {Lecture Notes in Computer Science},
% 	title = {Fixed Points for Discrete Logarithms},
% 	volume = {6197},
% 	url = {http://libproxy.rose-hulman.edu:2208/content/872604v2n4657371/},
% 	doi = {10.1007/978-3-642-14518-6\_5},
% 	booktitle = {Algorithmic Number Theory},
% 	publisher = {Springer},
% 	author = {M. Levin},
%         author = {C. Pomerance},
%         author = {K. Soundarajan},
% 	year = {2010},
% 	pages = {6--15}
% }

\bib{mann}{thesis}{
  title={Counting Solutions to Discrete
    Non-Algebraic Equations Modulo Prime Powers},
 % eprint={http://scholar.rose-hulman.edu/math_mstr/153}, 
  number={Mathematical Sciences Technical Reports (MSTR) 153},
  organization={Rose-Hulman Institute of Technology}, 
  author={Mann,  Abigail}, 
  date={2016}, 
  type = {Senior Thesis}
}

\bib{handbook}{book}{
	title = {Handbook of Applied Cryptography},
	isbn = {0849385237},
	url = {http://www.cacr.math.uwaterloo.ca/hac/},
	publisher = {{CRC}},
	author = {Alfred J. Menezes},
        author = {Paul C. van Oorschot},
        author = {Scott A. Vanstone},
	month = {oct},
	year = {1996}
}

\bib{somer}{article}{
  author = {Somer, Lawrence},
  title = {The residues of $n^n$ modulo $p$},
  journal = {Fibonacci Quarterly},
  volume = {19},
  number = {2},
  year = {1981},
  pages = {110--117}
}  

\bib{stadler}{inproceedings}{
  series={Lecture Notes in Computer Science}, 
  title={Publicly verifiable secret sharing}, 
    ISBN={978-3-540-61186-8},
 author={Stadler, Markus},
    booktitle={Advances in Cryptology --- {EUROCRYPT} '96},
   url={http://link.springer.com/chapter/10.1007/3-540-68339-9_17},
    book ={  
    publisher={Springer},
    place={Berlin Heidelberg}, 
    editor={Maurer, Ueli}, 
    year={1996},
    },   
        pages={190--199}
}

\bib{toth}{article}{
	title = {Menon's identity and arithmetical sums representing functions of several variables},
	volume = {69},
	issn = {0373-1243},
	url = {http://www.ams.org/mathscinet-getitem?mr=2884710},
	number = {1},
	urldate = {2015-08-22},
	journal = {Rendiconti del Seminario Matematico. Università e
          Politecnico Torino}, 
	author = {T\'oth, L.},
	year = {2011},
	mrnumber = {2884710},
	pages = {97--110}
}

\bib{wood}{report}{
  author = {Wood, A.},
  title = {The square discrete exponentiation map},
  institution = {Rose-Hulman Institute of Technology},
  series =  {Mathematical Sciences Technical Reports},
  year = {2011},
  number = {MSTR 11-05},
  eprint = {http://scholar.rose-hulman.edu/math_mstr/9/},
  url = {http://scholar.rose-hulman.edu/math_mstr/9/}
}

% \bib{zhang}{article}{
%       author={Zhang, Wen~Peng},
%        title={On a Problem of {B}rizolis},
% 	date={1995},
% 	ISSN={1008-5513},
%      journal={Pure Appl. Math.},
%       volume={11},
%       number={suppl.},
%        pages={1\ndash 3},
%       review={\MR{98d:11099}},
% }

\end{biblist}
\end{bibdiv}

\end{document}